\newtheorem{thm}{Theorem}[section]
\newtheorem{prop}[thm]{Proposition}
\newtheorem{theoremletter}{Theorem}
\theoremstyle{definition}
\newtheorem{rem}[thm]{Remark}
\newtheorem{defn}[thm]{Definition}
\newcommand{\C}{\mathbb{C}}
\newcommand{\Q}{\mathbb{Q}}
\newcommand{\Z}{\mathbb{Z}}
\newcommand{\G}{\mathrm{G}}
\newcommand{\I}{\mathrm{I}}
\newcommand{\cB}{\mathcal{B}}
   \newcommand{\cC}{\mathscr{C}}
\newcommand{\cE}{\mathscr{E}}
   \newcommand{\cF}{\mathscr{F}}
   \newcommand{\cG}{\mathscr{G}}
\newcommand{\cI}{\mathcal{I}}
   \newcommand{\cL}{\mathscr{L}}
\newcommand{\cJ}{\mathcal{J}}
\newcommand{\cN}{\mathcal{N}}
\newcommand{\cO}{\mathcal{O}}
   \newcommand{\cS}{\mathscr{S}}
\newcommand{\cT}{\mathcal{T}}
\newcommand{\cU}{\mathcal{U}}
\newcommand{\cV}{\mathcal{V}}
\newcommand{\cW}{\mathcal{W}}
   \newcommand{\cX}{\mathscr{X}}
\newcommand{\gl}{\mathfrak{l}}
\newcommand{\gm}{\mathfrak{m}}
\newcommand{\gp}{\mathfrak{p}}
\DeclareMathOperator{\sw}{\mathsf{w}}
\DeclareMathOperator{\ad}{ad}
\DeclareMathOperator{\an}{an}
\DeclareMathOperator{\cusp}{cusp}
\DeclareMathOperator{\Frob}{Frob}
\DeclareMathOperator{\Ind}{Ind}
\DeclareMathOperator{\Gal}{Gal}
\DeclareMathOperator{\GL}{GL}  
\DeclareMathOperator{\rH}{H}
\DeclareMathOperator{\Hom}{Hom}
\DeclareMathOperator{\ord}{ord}
\DeclareMathOperator{\Spec}{Spec }
\DeclareMathOperator{\res}{res}
\DeclareMathOperator{\tr}{tr}
\DeclareMathOperator{\tor}{tor}
\newcommand{\cLm}{\mathscr{L}_{\mbox{-}}}
\title{A geometric view on Iwasawa theory}
\author{Adel Betina and Mladen Dimitrov}
\address{Univ. Vienna, Faculty of Mathematics,  Oskar-Morgenstern-Platz 1, 1090 Wien,~Austria}
\email{adelbetina@gmail.com }
\address{Univ. Lille, CNRS, UMR 8524 -- Laboratoire Paul Painlev\'e, 59000 Lille, France}
\email{mladen.dimitrov@gmail.com }
\thanks{The first author acknowledges  support from  the EPSRC (Grant EP/R006563/1) and the START-Prize Y966 of the Austrian Science Fund (FWF). The second author is partially supported by   Agence Nationale de la Recherche  grants ANR-18-CE40-0029 and ANR-16-IDEX-0004.} 
\begin{document}

\maketitle

\begin{abstract}
This article extends our study  of the geometry of the $p$-adic eigencurve at a point defined by a weight $1$ cuspform $f$  irregular at $p$ and having complex multiplication, and the implications in Iwasawa  and in Hida theories. The novel results include the determination of the Fourier coefficients  of certain  non-classical $p$-adic modular forms belonging to the generalized eigenspace of  $f$, in terms of $p$-adic logarithms of algebraic numbers. We also compute  the ``mysterious'' cross-ratios of the $p$-ordinary filtrations of the Hida families containing~$f$. 
  \end{abstract}

\addtocontents{toc}{\setcounter{tocdepth}{0}}

\section*{Introduction}

\subsection{Historical background}
In the 1960s Kubota and  Leopoldt used Kummer's congruences involving Bernoulli numbers to define the $p$-adic zeta function $\zeta_p \in \Z_p \lsem \Gal(\Q_\infty/\Q) \rsem$ and Iwasawa formulated his Main Conjecture postulating that $\zeta_p$ generates the characteristic ideal of a certain $\Z_p \lsem \Gal(\Q_\infty/\Q) \rsem$-module controlling the class group growth of the number fields contained in the cyclotomic $\Z_p$-extension $\Q_\infty$ of $\Q$. Around the same time Serre \cite{serre-LNM} observed that ordinary Eisenstein series vary  $p$-adic analytically in the weight, hence can 
be interpolated over the  $p$-adic analytic weight space   $\cW(\C_p)=\mathrm{Hom}_{\mathrm{cont}}(\Z_p^{\times}, \C_p^{\times})$, 
thus providing a modular interpretation of  the  Kubota--Leopoldt $p$-adic zeta as a constant term of a $\Z_p \lsem \Gal(\Q_\infty/\Q) \rsem$-adic Eisenstein series.

In the 1980s H.~Hida gave a new impulse to the subject by $p$-adically interpolating  in \cite{hida85}  ordinary cuspforms of weight at least $2$.
He introduced a space  of  $\Z_p \lsem \Gal(\Q_\infty/\Q) \rsem$-adic cuspform which is in a  
perfect duality of finite  free $\Z_p \lsem \Gal(\Q_\infty/\Q) \rsem$-modules with the corresponding Hecke algebra and proved a Control Theorem (see \cite{Hida86}
 when $p$ is odd and \cite{ghate-kumar} when $p=2$). Furthermore, Hida showed that every eigenform as above is the specialization of a unique, up to Galois conjugacy, 
$\Z_p \lsem \Gal(\Q_\infty/\Q) \rsem$-adic eigenform,  also called a Hida family. The Galois theoretic properties of these families, namely the restriction at the decomposition group at $p$, were studied by B.~Mazur and A.~Wiles  who also made a crucial, for us, extension of Hida Theory to  allow classical weight one forms. 

Coming back to Iwasawa theory, Eisenstein  families play a prominent role in the proof  of the  Main Conjecture over $\Q$ by Mazur and Wiles.  They  proved one divisibility using Ribet's Eisenstein ideal method and then deduced the  equality via the class number formula. In their own words,  while being `explicit' from a certain modular perspective, the approach does not allow to determine whether or  not $\zeta_p$ admits  multiple zeros  (one merely knows that if that were the case then the characteristic series of the Iwasawa module  would have a zero of the same multiplicity). Instead, they relied on the Ferrero--Greenberg Theorem \cite{ferrero-greenberg} showing that
the `trivial' zeros of $\zeta_p$ are simple. 

Another remarkable class of Hida families have complex multiplication (CM) by an imaginary quadratic field $K$, 
and  are obtained by $p$-adic interpolation of classical theta series.  Their adjoint  $p$-adic $L$-functions 
is essentially equal to a Katz anti-cyclotomic $p$-adic $L$-function,  and the corresponding anti-cyclotomic 
 Main Conjecture over $K$ has been proven by K.~Rubin and independently by H.~Hida and J.~Tilouine. Similarly to the Eisenstein case, both proofs proceed by proving one divisibility and  by invoking the class number formula, but without determining the possible zeros and their multiplicities. The question, analogous to the Ferrero--Greenberg Theorem, of whether the trivial zeros of  Katz' anti-cyclotomic $p$-adic $L$-functions are simple, has remained open 
 while being reformulated  in terms of  certain Iwasawa modules, via the mysterious bridge envisioned by Iwasawa.  In  \cite{betina-dimitrov} we  use  Hida Theory together with  Mazur's Galois Deformations Theory, to 
 show that these  trivial zeros are indeed simple, provided that  a certain anti-cyclotomic $\cL$-invariant does not vanish, 
 as  predicted by the Four Exponentials Conjecture in Transcendence Theory.

 Explicit results in Hida theory have been notoriously difficult to obtain since the Hecke-Hida algebras are finite and flat over $\Z_p \lsem \Gal(\Q_\infty/\Q) \rsem$, a complete local algebra of Krull dimension $2$ isomorphic to $\Z_p \lsem X \rsem$ by a theorem of J.P.~Serre. 
What has made our approach successful is the passage to the discrete valuation ring $\Q_p\lsem X \rsem$, and even to its strict henselization $\Lambda=\bar\Q_p\lsem X \rsem$, allowing the use of the 
rigid geometry tools  brought to the modular world by R.~Coleman and B.~Mazur in the 1990s. 
A central object in this theory is the Coleman--Mazur  eigencurve $\cC\to \cW$ first introduced in 
\cite{coleman-mazur} (under some technical assumptions which were later removed by K.~Buzzard  \cite{buzzard})  as  the rigid analytic curve parametrizing systems of Hecke eigenvalues of overconvergent $p$-adic modular forms of finite slope (see \S\ref{eigencurvebase} for a detailed presentation). 

\subsection{Intersection numbers and \texorpdfstring{$p$}{p}-adic \texorpdfstring{$L$}{L}-functions}
The local geometry at classical points has significant impact on  Iwasawa theory 
since  $p$-adic $L$-functions  have  acquired a new variable `the weight' in addition to the cyclotomic variable considered in classical Iwasawa theory. R.~Greenberg and G.~Stevens \cite{greenberg-stevens} have brilliantly illustrated  how the weight variable can be used to shed light on  problems involving {\it a priori } only 
the cyclotomic variable, such as the Mazur--Tate--Teitelbaum Conjecture \cite{MTT} on the central trivial zeros of
the  $p$-adic $L$-function of  modular forms. More recently 
J.~Bella{\"{\i}}che  \cite{bellaiche-Inv}  pushed the idea of using the weight variable even further to 
palliate the shortage of critical values needed for the construction of  $p$-adic $L$-functions for modular forms of critical slope, provided that $\cC$ is smooth at such points.

 At points where the weight map $\sw:\cC\to \cW$ is \'{e}tale, one can use the weight space $\cW$ to parametrize a Coleman family, whereas at singular points it is a challenge in itself to attach a 
$p$-adic $L$-function to a  family. 
Let us recall that  Hida's Control Theorem \cite{Hida86} for ordinary forms, extended by Coleman \cite{coleman-ocmf}  to all  forms of non-critical slope, implies that  $\sw: \cC\to \cW$ is \'{e}tale at classical non-critical $p$-regular points of weight $\geqslant 2$.  
J.~Bella\"iche and one of us  showed in \cite{bellaiche-dimitrov} that $\cC$ is smooth at classical points of weight $1$
which are $p$-regular, using Galois deformations and Transcendence Theory, namely the Baker--Brumer Theorem on the linear independence of logarithms of algebraic numbers,  to elucidate the geometry of the eigencurve. 
The geometry of $\cC$ at points which are either irregular at $p$ or have critical slope is expected to be more  complicated, and  \S\ref{geomhist} presents the  state of the art on such matters.

Another link between the  geometry of $\cC$ and Iwasawa theory, is the expectation that local intersection numbers of $\cC$ should be directly related to the adjoint $p$-adic $L$-functions, as shown in Hida's trilogy 
\cite{hida63,hida64,hida66} and vast subsequent research by numerous authors. In particular, a vanishing of the adjoint $p$-adic $L$-functions, including for trivial reasons, should detect interesting geometric phenomena. 
This has been the leitmotiv in \cite{BDPozzi}, resp.  \cite{betina-dimitrov}, where the geometry of $\cC$ at certain 
Eisenstein, resp.  CM,  weight $1$ points irregular at $p$  is related to trivial zeros of  the Kubota--Leopoldt, resp. Katz anti-cyclotomic,   $p$-adic $L$-function. 

\bigskip

\noindent 
\begin{center} 
{ \small 
\begin{tikzpicture}[>=latex,rect/.style={draw=black, 
                   rectangle, 
                   fill=gray,
                   fill opacity = 0.1,
                   text opacity=1,
                   minimum width=70pt, 
                   minimum height = 25pt, 
                   align=center}, fine dots/.style={dash pattern=on 0.8pt off 0.8pt}]
]
  \node[rect] (A) {Anti-cyclotomic Katz \\ (resp. Kubota--Leopoldt)  \\ $p$-adic L-function};
  \node[rect] (B) [below=of A] {Structure of the local  \\ rings of $\cC$  at CM \\  (resp.  Eisenstein)  points};
  \node[rect, right=90pt of A] (C)  {The congruence ideal \\ of CM (resp. Eisenstein) \\ Hida family};
  \node[rect] (D) [below=of C] {Intersection numbers between \\ CM (resp. Eisenstein)  and non-CM \\  (resp. cuspidal) components of $\cC$};
  \draw[->,line width=0.15mm, dashed,dash pattern=on 1mm off 0.5mm] (B)-- node[midway, left] {upper bound on } node[midway, right] {trivial zero order} (A);
  \draw[stealth-] (B)-- node [above] {encoded in } (D);
  \draw[stealth-] (C)-- node [midway,sloped, above] {generates an }  
  node [midway,sloped, below] { ideal containing} (A);
  \draw[line width=0.15mm, double] (C)-- node [right] {} (D);
\end{tikzpicture}
}
\end{center} 

In the CM case summarized in \S\ref{CM-case}, we determine the completed local ring of $\cC$ at  the 
weight $1$ point and use the congruence ideal between CM and non-CM families  passing through that point, 
to provide an upper bound for the order of  the trivial zero of the corresponding branch $\zeta_{\varphi}^{-} \in \bar{\Z}_p \lsem X \rsem$ of the anti-cyclotomic Katz $p$-adic $L$-function. 
The exactness of this upper bound is predicted by widely accepted conjectures in Transcendence Theory. 
Furthermore, thanks to the Six Exponentials Theorem we know that at least one amongst  $\zeta_{\varphi}^{-}$  or $\zeta_{\bar\varphi}^{-}$ has a simple trivial zero.

In the Eisenstein case, treated in \cite{BDPozzi} and   summarized in \S\ref{eis-sec}, one can determine the local geometry of $\cC$ at a 
weight $1$ Eisenstein cuspidal overconvergent point unconditionally, and deduce from there Gross' formula for the derivative of the Kubota--Leopoldt $p$-adic zeta at a trivial zero. 
The non-trivial zeros all occur at  cuspidal overconvergent Eisenstein points having non-classical weight. By assuming Greenberg's pseudo-null conjecture, C.~Wang-Erickson and P. Wake showed in \cite{erickson-wake} that the cuspidal eigencurve is smooth at these points if, and only if, the zeros are simple. The question  about the \'{e}taleness of the weight map at these points is still open.

Regarding generalizations to groups of higher rank, in a collaboration with S.-C.~Shih \cite{betina-dimitrov-shih}, we  recently extended the main results of \cite{BDPozzi} to weight one Eisenstein  points on Hilbert eigenvarieties.  The investigation of Eisenstein Hida families in this setting goes at least back to Wiles'
work \cite{wiles} on the Iwasawa Main Conjecture over  totally real number fields, and also plays a prominent role in
the proof  \cite{DDP,dasgupta-kakde-ventullo} of the Gross--Stark Conjecture on the derivative of the Deligne--Ribet $p$-adic $L$-function at a trivial zero. 

\subsection{Overconvergent generalized eigenforms}
Fourier coefficients of classical eigenforms are related to arithmetic functions such as the partition function or the Dedekind eta function, and are motivic in nature as the corresponding two-dimensional Galois representations occur in the \'{e}tale cohomology of  proper smooth varieties. 
Amongst non-classical overconvergent forms, the closest in nature to a classical eigenform $f$, are those belonging 
to its generalized eigenspace $S_{\sw(f)}^{\dagger} \lsem f \rsem$. 
The very exclusive club of  genuine overconvergent generalized eigenforms $S_{\sw(f)}^{\dagger} \lsem f \rsem_0$
is a natural supplement of the classical subspace in $S_{\sw(f)}^{\dagger} \lsem f \rsem$. In his quest \cite{bellaiche-Inv,BelCM}  to attach $p$-adic $L$-functions to classical eigenforms of critical slope, J.~Bella{\"{\i}}che classified the possible such examples 
in  weight $k\geqslant 2$, and concluded that conjecturally the only genuine overconvergent generalized eigenforms 
are critical CM forms, whose Fourier coefficients were recently computed by Hsu \cite{hsu}. 
The first to take up the task in weight $1$ were H.~Darmon, A.~Lauder and V.~Rotger  \cite{DLR-Adv} 
who expressed  the Fourier coefficients of a certain  $p$-adic overconvergent weight $1$ generalized eigenform 
in terms of  $ p$-adic logarithms of algebraic numbers in ring class fields of real quadratic fields. 
The underlying classical weight $1$ form has real multiplication (RM) and, according to \cite{bellaiche-dimitrov}, it 
is the only case in which a $p$-regular weight $1$ form admits a genuine overconvergent generalized eigenspace. 
Such generalized eigenforms could provide an approach to Hilbert's twelfth problem
asking for an ``explicit Class Field Theory'' for real quadratic fields (analogous to the  theory of complex multiplication for  imaginary quadratic fields).

\medskip

In order to  state our main result concerning the $p$-irregular CM case, we need to fix some notations which will be used throughout the paper. 
We denote by $\G_L=\Gal(\bar{L}/L)$ the absolute Galois group of a  field $L$. 
The choice of an embedding $\iota_p:\bar\Q\hookrightarrow \bar\Q_p$ allows one to see 
 $\G_{\Q_p}$ as a decomposition  subgroup of $\G_{\Q}$. 
 Let $K$ be an imaginary quadratic field  having fundamental discriminant $-D$ in which  $p$ splits. 
Given a finite order character   $\psi:\G_K\to \bar\Q^\times \xrightarrow{\iota_p}  \bar\Q_p^\times $, we consider the anti-cyclotomic character 
$\varphi=\psi\cdot\bar\psi^{-1}$, where $\bar\psi$ denotes the {\it internal} Galois conjugate of $\psi$ by the complex conjugation $\tau\in \G_\Q\backslash \G_K$.  
 We assume that $\varphi_{\mid \G_{\Q_p}}$ is trivial, but $\varphi$ is not. 
Under these assumptions, the newform on $\GL_2/\Q$ obtained by automorphic induction from $K$ to $\Q$ of $\psi$ 
is a weight $1$ theta series  $\theta_\psi$ of level $N$ and central character $\varepsilon=\varepsilon_K\cdot \psi\circ\mathrm{Ver}$. Here  
$\varepsilon_K$ denotes the quadratic Dirichlet character attached to $K/\Q$, and 
$\mathrm{Ver}$ denotes the transfer homomorphism. 
Let $f=\sum_{n\geqslant 1} a_n q^n$  denote the unique $p$-stabilization of $\theta_\psi$
(see \eqref{qexp-f}). 

Throughout the paper, we make the following assumption on 
the  anti-cyclotomic $\cL$-invariants introduced in   \cite[\S1]{betina-dimitrov}
\begin{equation}\label{generic-CM}
\cLm(\varphi) \cdot \cLm(\bar\varphi) \cdot (\cLm(\varphi) + \cLm(\bar\varphi)) \ne 0, 
\end{equation}
and we choose a square root $\xi$ of $\cLm(\bar\varphi)\cLm(\varphi)^{-1}\cS_{\varphi}^{-1}$,
where the slope $\cS_{\varphi}$ is defined  in  \eqref{slope-def}. 
One should observe  that \eqref{generic-CM} is true for  $\varphi$  quadratic,  whereas in all other cases 
the Schanuel Conjecture  predicts that $\cLm(\varphi)$ 	and $\cLm(\bar\varphi)$  are algebraically independent. 
Finally,  we consider  the $p$-adic logarithms $\cL_{\gl}$ (for $\ell\ne p$ splitting in $K$ as 
$\gl\bar\gl$)  and $\cL_{\psi,\ell}$ (for $\ell$ ramified or inert in $K$)  of some explicit $\ell$-units defined in  \eqref{defn-ellunit} and  \eqref{psi-invariant}, respectively.

\begin{theoremletter}\label{main-q-exp}
There exists a basis $\{f^\dag_{\cF},f^\dag_{\Theta}\}$ of $S_{\sw(f)}^{\dagger} \lsem f \rsem_0$ whose $q$-expansion is as follows:
 \begin{enumerate}
\item For any prime $\ell \ne p $ splitting in $K$ as  $\gl \cdot \bar{\gl}$, one has 
\[ a_\ell(f^\dag_{\cF})=0 , \,\,  
a_\ell(f^\dag_{\Theta})= (\cL_{\gl} - \cL_{\bar{\gl}}) \cdot(\psi(\gl) -\psi(\bar\gl)). \]
 \item For any prime $\ell \mid DN$ not splitting in $K$,  one has  
\[  a_\ell(f^\dag_{\Theta})=0 , \,\,   a_\ell(f^\dag_{\cF})=
2 \psi(\gl) \xi   \cL_{\psi,\ell}\cdot \frac{\cLm(\varphi)}{\cLm(\varphi)+\cLm(\bar\varphi)}. \] 

\item For any prime $\ell\nmid N$ inert in $K$,  one has 
  \[ a_\ell(f^\dag_{\Theta})=0 , \,\,  \text{and } \,    a_\ell(f^\dag_{\cF})=
 2  \xi    \cL_{\psi,\ell}  \cdot \frac{\cLm(\varphi)}{\cLm(\varphi)+\cLm(\bar\varphi)}. \] 
\item Any form $\sum_{n \geqslant 1} a_n^\dagger\cdot q^n\in S_{\sw(f)}^{\dagger} \lsem f \rsem_0$ satisfies $a_1^\dag=a_p^\dag=0$ and the  following recursive relations: 
\begin{align*} 
 a_{mn}^\dag&=a_m a_n^\dag+a_n a_m^\dag,
 \text{ for all   } (n,m)=1, \text{ and } \\
 a_{\ell^r}^\dag&= \begin{cases} a_\ell a_{\ell^{r-1}}^\dag+a_{\ell^{r-1}}a_{\ell}^\dag -
 \varepsilon(\ell) a_{\ell^{r-2}}^\dag& , \text{ for  all primes }  \ell \nmid Np \text{ and  all } r\geqslant 2, \\
 r a_\ell^{r-1}a_{\ell}^\dag&,  
 \text{ othewise}. 
  \end{cases} 
\end{align*} 
\end{enumerate}
\end{theoremletter}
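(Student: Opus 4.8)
The plan is to interpret the genuine overconvergent generalized eigenspace $S^\dagger_{\sw(f)}\lsem f\rsem_0$ through Galois deformation theory, using the dictionary between overconvergent generalized eigenforms and first-order deformations of the dihedral representation $\rho_f=\Ind_{\G_K}^{\G_\Q}\psi$. An element $f^\dag=\sum_n a_n^\dagger q^n$ satisfying the generalized eigenvector relation $T_n f^\dag=a_n f^\dag+a_n^\dagger f$ gives rise to a tangent vector to $\cC$ at the point $x$ attached to $f$, equivalently to a class in $\rH^1(\G_\Q,\ad^0\rho_f)$ meeting the local conditions that cut out overconvergence; the functional $f^\dag\mapsto a_\ell^\dagger$ is then the derivative of $\tr\rho(\Frob_\ell)$ along this class. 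Since $\ad^0\rho_f=\varepsilon_K\oplus\Ind_{\G_K}^{\G_\Q}\varphi$ and, by Shapiro's lemma, $\rH^1(\G_\Q,\Ind_{\G_K}^{\G_\Q}\varphi)=\rH^1(\G_K,\varphi)$, the tangent space decomposes into a CM-preserving line (the $\varepsilon_K$-part, along the Hida family $\Theta$) and a CM-breaking part (governed by $\rH^1(\G_K,\varphi)$ and $\rH^1(\G_K,\bar\varphi)$, along the non-CM family $\cF$). This produces the two basis vectors $f^\dag_\Theta$ and $f^\dag_\cF$, whose structure is pinned down by the determination of the completed local ring of $\cC$ at $x$ recalled in \S\ref{CM-case}.

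First I would treat $f^\dag_\Theta$, which is the anti-cyclotomic derivative along the CM Hida family. As the members of this family are theta series, their Fourier coefficients vanish at primes not split in $K$, giving $a_\ell(f^\dag_\Theta)=0$ in cases (ii)--(iii) at once. At a split prime $\ell=\gl\bar\gl$ one has $a_\ell=\psi(\gl)+\psi(\bar\gl)$ along the family; since the deformation of $\psi$ along $\Theta$ is anti-cyclotomic and trivial on $\G_{\Q_p}$, differentiating yields $\psi(\gl)(\cL_\gl-\cL_{\bar\gl})-\psi(\bar\gl)(\cL_\gl-\cL_{\bar\gl})=(\cL_\gl-\cL_{\bar\gl})(\psi(\gl)-\psi(\bar\gl))$, which is part (i) for $f^\dag_\Theta$. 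Here the $p$-adic logarithms $\cL_\gl$ arise, via Kummer theory, as the value at $\Frob_\gl$ of the cocycle describing the anti-cyclotomic deformation.

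Next I would analyze $f^\dag_\cF$, the CM-breaking direction. The classes in $\rH^1(\G_K,\varphi)$ and $\rH^1(\G_K,\bar\varphi)$ are represented, through global class field theory and Kummer theory, by the explicit $\ell$-units underlying $\cL_\gl$ and $\cL_{\psi,\ell}$. At split primes the off-diagonal entries of the deformation do not contribute to the trace, so $a_\ell(f^\dag_\cF)=0$, giving the remaining half of (i); at a non-split prime the induced representation is locally irreducible and the relevant class contributes through $\cL_{\psi,\ell}$, whence the formulas of (ii)--(iii) after inserting the normalization forced by the local ring at $x$. Finally, (iv) is formal: $a_1^\dagger=0$ and the Leibniz-type relation $a_{mn}^\dagger=a_m a_n^\dagger+a_n a_m^\dagger$ for $(m,n)=1$, together with the prime-power recursions, follow by differentiating the Hecke relations $T_{mn}=T_mT_n$ and $T_{\ell^r}=T_\ell T_{\ell^{r-1}}-\varepsilon(\ell)T_{\ell^{r-2}}$ (respectively $U_\ell^r=U_\ell^{\,r}$ at $\ell\mid Np$) applied to the generalized eigenvector relation, while $a_p^\dagger=0$ reflects that $f$ is the unique $p$-stabilization in the $p$-irregular case.

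The main obstacle is the third step: pinning down the exact constants in (ii)--(iii). This requires identifying the CM-breaking cohomology classes with specific global $\ell$-units, computing the $p$-adic logarithms obtained by evaluating them at Frobenius, and reconciling the outcome with the precise shape of the completed local ring at $x$ so that the $\cL$-invariants, the slope $\cS_\varphi$ and the square root $\xi$ appear with the correct exponents and in the ratio $\cLm(\varphi)/(\cLm(\varphi)+\cLm(\bar\varphi))$. The non-vanishing needed for $\xi$ and for this ratio to be well defined is exactly what the genericity hypothesis \eqref{generic-CM} guarantees.
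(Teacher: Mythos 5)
Your overall strategy --- identifying $S_{\sw(f)}^{\dagger}\lsem f\rsem_0$ with a space of infinitesimal ordinary deformations of $\rho_f$, splitting it into an anti-cyclotomic (CM-preserving) direction and a CM-breaking direction governed by $\rH^1(\G_K,\varphi)$, and reading off $a_\ell^\dagger$ as the derivative of $\tr\rho(\Frob_\ell)$ evaluated via class field theory on $\ell$-units --- is essentially the approach of the paper, which realizes the two directions concretely as $f^\dag_\Theta=\log_p(1+p^\nu)\tfrac{d}{dX}\big|_{X=0}(\Theta_{\bar\psi}-\Theta_\psi)$ and $f^\dag_\cF=\log_p(1+p^\nu)\tfrac{d}{dX}\big|_{X=0}(\cF\otimes\varepsilon_K-\cF)$. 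Part (iv) and the split-prime computations are handled correctly. But two steps do not hold up. First, your claim that $a_\ell(f^\dag_\Theta)=0$ in cases (ii)--(iii) ``at once'' because theta series have vanishing Fourier coefficients at primes not split in $K$ is false at ramified primes: for $\ell\mid D$ with $\ell D\nmid N$ one has $a_\ell(\Theta_\psi)=\psi\chi_\gp(\gl)\ne 0$, and indeed $\tfrac{d}{dX}\big|_{X=0}a_\ell(\Theta_\psi)=\psi(\gl)\log_p(\ell)/(2\log_p(1+p^{\nu}))$ is nonzero. The vanishing of $a_\ell(f^\dag_\Theta)$ there is a cancellation, not a triviality: since $\varphi(\gl)=1$ one has $\psi(\gl)=\bar\psi(\gl)$ and $\eta_\gp(\Frob_\gl)=\eta_{\bar\gp}(\Frob_\gl)=-\tfrac12\log_p(\ell)$, so the derivatives along $\Theta_\psi$ and $\Theta_{\bar\psi}$ coincide and their difference vanishes. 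This has to be computed.

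Second, and this is the heart of the theorem, you explicitly defer the determination of the constants in (ii)--(iii) (``after inserting the normalization forced by the local ring at $x$''), but that normalization is exactly what must be proved. Concretely one needs: (a) the infinitesimal matrix of $\rho_{\cF}$ modulo $X^2$, whose off-diagonal entries are $\xi\cL\eta_\varphi$ and $\xi^{-1}\bar\cL\cS_{\bar\varphi}\bar\eta_\varphi$ with $\xi^2=\cS_{\bar\varphi}\bar\cL/\cL$; this rests on the reducibility ideal being exactly $(X^2)$ and on normalizing $\rho_\cF(\tau)$ to the antidiagonal matrix, which requires an argument (computing $\rho_\cF(\tau^2)$ to kill the extra parameters). (b) The identity $\tr\rho_\cF(g)\equiv-\xi\cL\,\psi(\tau g)\,\eta_\varphi(g^2)\,X$ for $g\notin\G_K$, obtained by comparing $\rho_\cF(g)^2$ with $\rho_\cF(g^2)$; this is what makes the inert-prime coefficient a value of $\eta_\varphi$ at $\Frob_\lambda$. (c) The class-field-theoretic evaluation $\eta_\varphi(\Frob_\lambda)=\cL_{\varphi,\lambda}$ together with the verification that $\cL_{\psi,\ell}=\psi(\tau\gamma)\cL_{\varphi,\lambda}$ is independent of the choices of $\lambda$ and $\gamma$ (using the transformation $\cL_{\varphi,h(\lambda)}=\varphi(h)\cL_{\varphi,\lambda}$). (d) At ramified primes, the Frobenius action on the inertia invariants, which produces the factor $\psi(\gl)$ and a diagonal term $(\cL+\bar\cL)\cL_\gl$ that disappears only in the difference with $\cF\otimes\varepsilon_K$. (e) The factor $2$ itself, which arises because $f^\dag_\cF$ is the difference of $\cF$ and its quadratic twist, i.e.\ the substitution $\xi\mapsto-\xi$ doubles the off-diagonal contribution. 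Without these steps the formulas in (ii)--(iii), including the appearance of $\xi$ rather than $\xi^{-1}\cS_{\bar\varphi}$ and of the ratio $\cLm(\varphi)/(\cLm(\varphi)+\cLm(\bar\varphi))$, are not established.
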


Most of the paper is organized around the proof of the above Theorem. In \S\ref{eigencurvebase} we summarize the basic properties of $\cC$ while making a detour to define a geometric $q$-expansion of a  Coleman family 
at a cusp of the ordinary locus using the overconvergent modular sheaf constructed by V.~Pilloni \cite{pilloni}. 
 Using the resulting  $q$-expansion Principle we prove a perfect Hida duality between the space of Coleman families and the corresponding Hecke algebra. Exploiting this duality and  the results of \cite{betina-dimitrov} 
on the local geometry of $\cC$ at $f$, allows us  in \S\ref{sec-proofs} to  compute infinitesimally 
the Fourier coefficients of all families passing through $f$ and to obtain Theorem \ref{main-q-exp}. The resulting formulas involve 
$p$-adic logarithms of algebraic numbers in the field cut out by the adjoint Galois representation attached to  $f$.

\subsection{A mysterious cross-ratio}
Let us first formulate the precise problem. We let $f$ be a $p$-ordinary stabilization of a newform of weight $k\geqslant 1$ and level 
$\Gamma_1(N)$, and denote by $\alpha\ne 0$ its $U_p$-eigenvalue.  If $f$ is $p$-regular then 
one knows that  (see \S\ref{weight-geq-2} for more details)
\begin{itemize}
\item $f$ belongs to a unique, up to Galois conjugacy, Hida family $\cF$ and the $\Lambda=\cO_{\cW,\sw(f)}^\wedge\simeq\bar\Q_p\lsem X \rsem$-algebra $\cO_{\cC,f}^\wedge$ is isomorphic to $\bar\Q_p\lsem Y \rsem$, where $Y^e=X$ for some $e\geqslant 1$,  
\item the corresponding  Galois representation $\rho_{\cF}:\G_\Q\to \GL_2(\bar\Q_p\lsem Y \rsem)$ is $p$-ordinary and its 
$\G_{\Q_p}$-stable line reduces modulo $(Y)$ to the unique $\G_{\Q_p}$-stable line in $\rho_f$ such that the
arithmetic Frobenius $\Frob_p$ acts by $\alpha$ on the (unramified) quotient. 
\end{itemize}

If $f$ is irregular  at $p$ and $k=1$, then the restriction of $\rho_f$ to $\G_{\Q_p}$ is scalar, given by an unramified character sending $\Frob_p$ to $\alpha$. Galois conjugacy classes of Hida families $\cF$ containing $f$   are in bijection with the irreducible components of $\cC$ containing $f$, and 
\begin{itemize}
\item either $\cO_{\cF,f}^\wedge$ is not a regular ring ({\it e.g.} it is not even normal) and  $\rho_{\cF}$ might not even admit an ordinary filtration, 
\item or $\cO_{\cF,f}^\wedge\simeq\bar\Q_p\lsem Y \rsem$ is a discrete valuation ring and thus $\rho_{\cF}$ does admit a (unique) ordinary filtration yielding, when reduced modulo $(Y)$,  a well defined line in $\rho_f$, {\it i.e.} an element of $\mathbb{P}(\rho_f)$. 
\end{itemize}

The choice of basis for $\rho_f$ allows one to identify  $\mathbb{P}(\rho_f)$ with $\mathbb{P}^1(\bar\Q_p)$ and each of the 
 finitely many ``regular'' Hida families containing  $f$ picks a well-defined element in it. 
 We will now illustrate this phenomenon with two familiar   examples.

Suppose first that  $f$ is a weight $1$ Eisenstein series which is irregular at $p$. It is  natural to chose a basis in which $\rho_f$ is reducible and semi-simple. 
There are two Eisenstein Hida families containing $f$ having residual slope $0$ and  $\infty$, respectively. 
The main result in \cite{BDPozzi} shows that there is a unique cuspidal Hida family $\cF$ containing $f$, 
whose residual slope belongs to $\bar\Q_p^\times$. Since one can rescale the vectors of the  basis, 
all values in $\bar\Q_p^\times=\mathbb{P}^1(\bar\Q_p)\setminus \{0,\infty\}$ are allowed, 
the forbidden values $0$ and $\infty$ corresponding to the  two $\G_\Q$-stable lines. 
One can then recover $\rho_{\cF}$ as the universal ordinary deformation of $\rho_f$ endowed with {\it any } 
non-$\G_\Q$-stable line. A tame analogue of such deformation problems  was used by 
F.~Calegari and M.~Emerton in \cite{calegari-emerton} to establish an $R=T$ theorem for the
weight $2$ Hecke algebra at Eisenstein primes.

Assume now  that $f$ is a weight $1$ cuspform irregular at $p$ and having complex multiplication. 
The situation is then more rigid as $\rho_f$ is irreducible. As $\rho_f$ is odd, a canonical pair of elements of $\mathbb{P}(\rho_f)$ is given by the eigenspaces for the complex conjugation $\tau\in \G_\Q$. 
Recall the notations introduced immediately before Theorem \ref{main-q-exp}, in particular the number 
$\xi\in \bar\Q{}_p^\times$. 

\begin{theoremletter}\label{thm-cross-ratio}
Under the assumption  \eqref{generic-CM}, the ordinary lines of the four families containing $f$  are pairwise distinct and 
their  cross-ratio belongs to  $\{-1,2,\frac{1}{2}\}$. Moreover, the cross-ratio of the line fixed by the complex conjugation 
and the three lines in $\rho_f$ obtained by reducing the  ordinary lines of $\cF$ and the two CM families containing $f$, 
belongs to 
$\left\{\xi,\frac{1}{\xi}, 1-\xi, \frac{1}{1-\xi},\frac{\xi}{\xi-1},\frac{\xi-1}{\xi} \right\}$.
\end{theoremletter}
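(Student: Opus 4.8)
The plan is to locate, inside $\mathbb{P}(\rho_f)\simeq\mathbb{P}^1(\bar\Q_p)$, the four ordinary lines and the two $\tau$-eigenlines, and then to read off the cross-ratios. I would fix a basis $(e_1,e_2)$ of $\rho_f=\Ind_K^\Q\psi$ in which $\G_K$ acts by $\mathrm{diag}(\psi,\bar\psi)$ and the complex conjugation $\tau$ interchanges $e_1,e_2$; writing $[ae_1+be_2]$ as the point $z=b/a$, the two $\G_K$-stable lines $\langle e_1\rangle,\langle e_2\rangle$ sit at $z=0$ and $z=\infty$, and, since $\det\rho_f=\varepsilon$ is odd, the two $\tau$-eigenlines $\langle e_1\pm e_2\rangle$ sit at $z=\pm1$, the line fixed by $\tau$ being $z=1$. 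As everything in sight is $\mathrm{PGL}_2$-invariant, it remains only to pin down the positions of the four ordinary lines.

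The key mechanism is that, because $\rho_{f\mid\G_{\Q_p}}$ is the scalar unramified character $\Frob_p\mapsto\alpha$, the adjoint action of $\G_{\Q_p}$ on $\ad\rho_f$ is trivial; hence for a regular family (one whose local ring is a discrete valuation ring) the ordinary line is determined by the first-order term of $\rho_{\cF}$, namely by the restriction to $\G_{\Q_p}$ of its tangent cocycle $\G_\Q\to\ad\rho_f$, which is now an honest homomorphism. Decomposing this local homomorphism into its unramified and ramified parts, the conditions ``stable sub-line with unramified quotient'' force the ordinary line to be the image of the ramified part. For the two CM families this ramified part points along a $\G_K$-stable direction: restricting $\Ind_K^\Q\Psi$ to $\G_{\Q_p}=\G_{K_\gp}$ splits it along the places $\gp,\bar\gp$ above $p$, so the ramified-sub/unramified-quotient line is the deformation of $\langle e_1\rangle$ or of $\langle e_2\rangle$. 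The two CM ordinary lines are therefore the canonical points $z=0$ and $z=\infty$.

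The crux is the non-CM locus, where I would invoke the determination of $\cO_{\cC,f}^\wedge$ and of the tangent cocycles obtained in \cite{betina-dimitrov}. Using the decomposition $\ad^0\rho_f=\varepsilon_K\oplus\Ind_K^\Q\varphi$, together with the fact that the non-CM deformation has no component along the split (CM) directions --- visible in the vanishing $a_\ell(f^\dag_{\cF})=0$ for split $\ell$ in Theorem \ref{main-q-exp}(i) --- one reduces the ordinarity of the tangent deformation to a single relation between the $\gp$-ramified and unramified parts of the relevant class in $\rH^1(\G_{\Q_p},\ad^0\rho_f)$. This relation is quadratic in the coordinate $z$ of the ordinary line, its coefficients are the anti-cyclotomic invariants $\cLm(\varphi),\cLm(\bar\varphi)$ and the slope $\cS_\varphi$, and its two roots are $\pm\xi$ with $\xi^2=\cLm(\bar\varphi)\cLm(\varphi)^{-1}\cS_\varphi^{-1}$ --- the two signs being exactly the two non-CM branches through $f$. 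Turning the ordinary condition into this explicit quadratic and recognising $\xi^2$ as its discriminant, with the correct normalisations of $\cLm(\cdot)$ and $\cS_\varphi$ from \cite{betina-dimitrov}, is the step I expect to be the main obstacle. Assumption \eqref{generic-CM} then guarantees $\xi\notin\{0,\infty,\pm1\}$, so that the four ordinary lines $z\in\{0,\infty,\xi,-\xi\}$ are pairwise distinct and distinct from the $\tau$-eigenlines.

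Granting the positions $\{0,\infty,\xi,-\xi\}$, the conclusions follow by elementary cross-ratio bookkeeping. The four ordinary lines have cross-ratio $(0,\infty;\xi,-\xi)=-1$ for one ordering; since permuting four points moves the cross-ratio within its $S_3$-orbit, and the orbit of $-1$ is the harmonic orbit $\{-1,2,\tfrac12\}$, the first assertion follows --- the fact that this set is independent of $\xi$ is precisely the reflection of the two non-CM lines being negatives of each other. For the second assertion, combining the $\tau$-fixed line at $z=1$, the CM lines at $z=0,\infty$, and the line of $\cF$ at $z=\xi$ gives $(1,\xi;0,\infty)=\tfrac1\xi$; its $S_3$-orbit is exactly $\left\{\xi,\tfrac1\xi,1-\xi,\tfrac1{1-\xi},\tfrac{\xi}{\xi-1},\tfrac{\xi-1}{\xi}\right\}$, accounting for the ordering ambiguity among the three non-$\tau$ lines and completing the proof.
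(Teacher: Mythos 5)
Your proposal follows essentially the same route as the paper: fix the basis $(e_1,e_2)$ with $e_2=\rho_f(\tau)e_1$, place the $\tau$-fixed line at $e_1+e_2$ and the CM ordinary lines at $e_1,e_2$, locate the non-CM ordinary lines at $\pm\xi$ relative to this basis, and finish by cross-ratio/$S_3$-orbit bookkeeping. The step you flag as the ``main obstacle'' is precisely Proposition~\ref{infinitesimal-rhoF} (the rank-one inertia part at $\gp$ of the infinitesimal matrix has image $\xi e_1+e_2$, with $\cF\otimes\varepsilon_K$ obtained by $\xi\mapsto-\xi$), so your argument is complete once that proposition is invoked; your coordinate convention puts $\cF$ at $1/\xi$ rather than $\xi$, which only permutes elements within the stated $S_3$-orbits.
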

 
 \vspace{-5mm}
\begin{center}
\begin{tikzpicture}
  \coordinate (center) at (1,2);
  \def\radius{1.5cm}
  \draw (center) circle[radius=\radius];
   \fill[] (center) ++(0:\radius) circle[radius=2pt]++(0:10pt) node {$1$};
  \fill[] (center) ++(180:\radius) circle[radius=2pt]++(180:10pt) node {$-1$};
   \fill[] (center) ++(270:\radius) circle[radius=2pt]++(270:10pt) node {$0$};
  \fill[] (center) ++(90:\radius) circle[radius=2pt] ++(90:10pt) node {$\infty$};
    \fill[]   (center) ++(115:\radius) circle[radius=2pt] ++(115:10pt) node {$-\xi$};
     \fill[] (center) ++(65:\radius) circle[radius=2pt] ++(65:10pt) node {$\xi$};
          \fill[] (center) ++(-65:\radius) circle[radius=2pt] ++(-65:10pt) node {$\tfrac{1}{\xi}$};
   \fill[] (center) ++(40:\radius) circle[radius=2pt] ++(40:10pt) node {$2$};
      \fill[] (center) ++(-40:\radius) circle[radius=2pt] ++(-40:10pt) node {$\tfrac{1}{2}$};
   \fill[] (center) ++(25:\radius) circle[radius=2pt] ++(0:15pt) node {$\tfrac{\xi}{\xi-1}$};
  \fill[]   (center) ++(-25:\radius) circle[radius=2pt] ++(0:15pt) node {$\tfrac{\xi}{\xi-1}$};
      \fill[]   (center) ++(130:\radius) circle[radius=2pt] ++(180:15pt) node {\small $1-\xi$};
      \fill[]   (center) ++(-130:\radius) circle[radius=2pt] ++(180:15pt) node {$\tfrac{1}{1-\xi}$};
\end{tikzpicture}
\end{center}

\vspace{-5mm}
\tableofcontents
\addtocontents{toc}{\setcounter{tocdepth}{1}}

\section{Background on the \texorpdfstring{$p$}{p}-adic eigencurve  \texorpdfstring{$\cC$}{C}}\label{eigencurvebase}

\subsection{Overconvergent modular forms} \label{ocmf}
Let $p$ be any prime number. 
For  an integer $N \geqslant 4$ relatively   prime to $p$, we let $\cX$ be the proper smooth modular curve of level $\Gamma_1(N)$ over $\Z_p$ and $\cE \to \cX$ be the universal generalized elliptic curve. The fiber of $\cE$ above any cusp is given by a certain N\'eron polygon endowed with  $\Gamma_1(N)$-level structure. 

The invertible sheaf $\omega$ on $\cX$ is defined as the pull-back of the relative differentials $\Omega_{\cE/\cX}$ along the  zero section of $\cE \to \cX$.  The space of classical modular forms of weight $k \in \Z_{\geqslant 1}$, level $\Gamma_1(N)$ and coefficients in a $\Z_p$-algebra $A$ is defined as 
$M_k(N;A)=\rH^0(\cX_{A}, \omega^{\otimes k}_{A})$. The $A$-module $M_k(N;A)$ is functorial in $A$ and commutes with flat base change.

Let $X^{\an}$ be the rigid analytification of  the generic fibre $X=\cX_{\Q_p}$ of $\cX$. 
 Note that  the properness of $\cX$ over $\Z_p$ implies that $X^{\an}$
 is also the rigid space  in the sense of Raynaud. The analytification of the line bundle $\omega$  is a line bundle on $X^{\an}$ and will still be denoted  by $\omega$. 
 
 The  ordinary locus $X^{\ord}$ is  the complement of the supersingular residue classes  in $X^{\an}$ and  can be characterized as the locus where  
  the truncated valuation of the Hasse invariant is $0$. More generally, for $v\in \Q_{>0}$,  let $X(v)$ denote the strict overconvergent neighborhood 
  in $X^{\an}$  of the ordinary locus $X^{\ord}=X(0)$  where the (truncated) valuation of the Hasse invariant is $ \leqslant v$.  The 
  space  Katz $p$-adic  modular forms of weight $k\in \Z$ is the infinite dimensional $\C_p$-vector space 
  $\rH^0(X^{\ord}_{\C_p}, \omega^{\otimes k}_{\C_p})$, whereas the space of $p$-adic overconvergent modular forms was defined  by Coleman as the 
    Fr\'echet $\C_p$-vector space: 
  \[M^\dagger_k=\lim_{\substack{\longrightarrow\\ v >0}}  \rH^0(X(v)_{\C_p} ,\omega^{\otimes k}_{\C_p}).\] 

The weight space   $\cW_p$ is the rigid space over $\Q_{p}$ representing  homomorphisms $\Z_{p}^{\times} \rightarrow \mathbb{G}_{m}$. 
We consider  $\Z$  as a subset of $\cW_p$  by sending $k\in \Z$ to the algebraic character $x\mapsto x^{k}$. 
The space $\cW_p$ is a disjoint union, indexed by the characters of $(\Z/2p\Z)^{\times}$, of copies of  the rigid open unit disk
$\{\mid z-1 \mid_p <1 \}$  representing  homomorphisms $1+2p\Z_{p} \rightarrow \mathbb{G}_{m}$
(via the image of the topological generator $1+2p$). 
The latter is admissibly covered by the closed disks $\cB_m=\{\mid z-1 \mid_p\leqslant p^{-1/n_m}\}$, where the  increasing sequence of positive integers  $(n_m)_{m\geqslant 1}$ is chosen so that the following holds. 
For  any  $m\in \Z_{\geqslant 1}$ there exists a (unique) character 
\[\tilde{\kappa}_m:\Z_p^{\times}\cdot(1+p^m  \cO_{\C_p}) \to \cO(\cB_m)^{\times}\] 
extending the universal character $\kappa_m: \Z_p^{\times} \to \cO(\cB_m)^{\times} $ and whose restriction to $(1+p^m  \cO_{\C_p})$ is analytic.

The invertible sheaf $\omega$ is representable by $\Hom_{\cX}(\cO_{\cX}, \omega)$ and we 
denote by $\pi: \cI=\mathrm{Isom}_{\cX}(\cO_{\cX}, \omega)\to \cX$  the  corresponding $\mathbb{G}_m$-torsor. 
The fibers of the  rigid analytification  $\pi^{\an} :\cI^{\an} \to X^{\an}$ are naturally isomorphic to  $\C_p^{\times}$. 
For  $k \in \Z$ one can recover $\omega^k$ as $\pi^{\an}_{*}(\cO_{\cI^{\an}})[k]$, where 
$[k]$ means the $k$-equivariant sections for the action of $\mathbb{G}_m$.

In \cite[\S3]{pilloni},  Pilloni showed that there exists  $v_m>0$ and an invertible sheaf $\omega_m $ on $X(v_m) \times \cB_m$
specializing,  for any  $k\in \Z_{\geqslant 1} \cap \cB_m$, to the automorphic line bundle  $\omega^{\otimes k}$ on $X(v_m)$. 
Namely, he constructed an open $\cJ_m$   of $\cI^{\an}_{\mid X(v_m)}$  endowed with $\Z_p^{\times}$-action   such that 
\[\omega_m  = \left(\pi^{\an} _*\cO_{\cJ_m} \widehat{\otimes} \cO_{\cB_m} \right)[\kappa_m ],\]
where  $[\kappa_m ]$ means the $\kappa_m $-equivariant sections for the action of $\Z_p^{\times}$.
More precisely, by {\it loc. cit.}  $\cJ_m$ is locally isomorphic for the \'{e}tale topology on $X^{\ord} $  to the union of disks 
\[\Z_p^{\times}\cdot (1+p^m  \cO_{\C_p})=\underset{y \in (\Z/p^m\Z)^{\times}}{\bigcup}\mathrm{B}(\tilde{y},p^{-m}) \subset \C_p^{\times},\] where    
$\tilde{y}\in \Z_p^{\times} $ denotes a  lift of $y$  and $ \mathrm{B}(\tilde{y},p^{-m}) =\{z\in \C_p , \mid z-\tilde{y} \mid_p\leqslant p^{-m}\}$. 
By definition $\omega_m$  is locally isomorphic for the \'{e}tale topology on $X^{\ord}$ to the eigenspace of $\cO(\cB_m)$-valued locally analytic functions on $\Z_p^{\times}\cdot(1+p^m  \cO_{\C_p})$ which are $\kappa_m$-equivariant for the action of $\Z_p^{\times}$. This space is clearly generated by the section corresponding to $\tilde{\kappa}_m$. 

Let $\cU$ be an open   affinoid  of  $\cW_p$ which is  contained  in some $\cB_m$. 
For $v>0$ sufficiently small, we  let $\omega_\cU$ denote the corresponding invertible sheaf on $X(v) \times \cU$. 
The correspondence $U_p$ is  defined  on the locus $X\left(\tfrac{p}{p+1}\right)$ where the canonical subgroup of $\cE$ exists.  It sends $X(v)$ on $X\left(\frac{v}{p}\right)$ and the resulting endomorphism of the
$\cO(\cU)$-Banach module $\rH^0(X(v) \times \cU,\omega_\cU)$ is compact. 
As $\cU$ is reduced, after possibly shrinking it around any given point, the 
$\cO(\cU)$-Banach module $\rH^0(X(v) \times \cU,\omega_\cU)$ admits  slope decomposition (see \cite[\S2.3]{hansen}). 
It follows that the subset $\rH^0(X(v) \times \cU,\omega_\cU)^{\leqslant s}$ 
of elements of  slope at most $s\in \Q_{\geqslant 0}$ is a $\cO(\cU)$-submodule which is 
locally free and of finite type.   The $\cO(\cU)$-module of  Coleman families is defined as
\[ M^{\dag,\leqslant s}_\cU= \varinjlim_{v>0} \rH^0(X(v) \times \cU,\omega_\cU)^{\leqslant s}. \]

The weight space $\cW_p$  can be admissibly covered by countably many open affinoids $\cU$ as above for each of which  
$M^{\dag,\leqslant s}_\cU$ is free of finite type over $\cO(\cU)$. 

\begin{rem}\label{radius}
One can extend the definition to  tame levels $N \leqslant 3$ by considering the analogous objects on the modular curve of level $\Gamma_1(4N)$ and then taking  $\Gamma_1(N)$-invariants. Note however that this will not be needed in \S\ref{sec-proofs}
where Theorems \ref{main-q-exp} and \ref{thm-cross-ratio} are proven, as $N\geqslant D \geqslant 4$ in that case. 
\end{rem}

\subsection{$q$-expansions of Coleman families} 
In this subsection we describe how one  can attach to a Coleman family a geometric $q$-expansion which interpolates the $q$-expansions of its classical specializations and satisfies the $q$-expansion Principle. 
 
The generalized elliptic curve $\mathrm{Tate}(q)$ over $\Z_p\lsem q \rsem$, with $1$-gon special fiber and natural $\Gamma_1(N)$-level structure defines a morphism $\Spec(\Z_p\lsem q \rsem) \to \cX$ (the section $q=0$ corresponds to $\infty$).    Let $d^\times t$ be the canonical differential of $\mathrm{Tate}(q)$,  {\it i.e.},   a canonical $\cO_{\cX,\infty}^\wedge$-basis of the completed stalk ${\omega}_{\infty}^\wedge$ of $\omega$ along the section  $\infty:\Spec(\Z_p) \to \cX$. For $k\in \Z$, one can identify $(\omega_{\infty}^\wedge)^{\otimes k}$ with 
$\cO_{\cX,\infty}^\wedge=\Z_p \lsem q \rsem$ using the canonical basis $(d^\times t)^{\otimes k}$, allowing one to geometrically define the  $q$-expansion of an overconvergent modular form.

\begin{prop}\label{q-expprinciple}There exists a neighborhood $\cV$ for the \'{e}tale topology of the cusp $\infty \in X^{\ord}$ and a generator  of $\omega_\cU(\cV \times \cU)$ specializing at any $k\in \Z_{\geqslant 1} \cap \cU$ to the canonical differential $(d^\times t)^{\otimes k}$. In particular, one can  attach to any Coleman family $\cF \in M^{\dag,\leqslant s}_\cU$ a $q$-expansion $\sum_{n  \geqslant 0} a_n(\cF) q^n \in \cO(\cU) \lsem q \rsem$ interpolating the $q$-expansions of its classical specializations. Moreover,  the  $q$-expansion Principle holds,  {\it i.e.},  the $q$-expansion  map is injective.  
\end{prop}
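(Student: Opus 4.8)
The plan is to produce the desired generator as a restriction of Pilloni's tautological section $\tilde\kappa_m$, to read off the $q$-expansion from the induced trivialisation along the cusp, and to deduce injectivity from the classical $q$-expansion Principle together with the density of classical weights in $\cU$.

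First I would record that $\infty\in X^{\ord}$: the fibre of $\cE$ over $\infty$ is the N\'eron $1$-gon, whose identity component is $\mathbb{G}_m$, so $\mathrm{Tate}(q)$ is ordinary and $\infty\in X(v)$ for every $v\geqslant 0$. The canonical differential $d^\times t$ trivialises $\omega$ near $\infty$ and hence defines a distinguished point $s_0$ of the fibre $(\pi^{\an})^{-1}(\infty)\cong\C_p^\times$ of the torsor $\cI^{\an}$, namely the point $1$ for the trivialisation that $d^\times t$ itself induces. The essential geometric input is that $s_0$ lies in Pilloni's open $\cJ_m$ and is the distinguished point of its local model $\Z_p^\times\cdot(1+p^m\cO_{\C_p})$. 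This is where the Tate parametrisation enters: the canonical subgroup of $\mathrm{Tate}(q)$ is $\mu_p$, on which $d^\times t=\tfrac{dt}{t}$ restricts to the standard generator, so $s_0$ corresponds to $1\in\Z_p^\times$; moreover the level structure $\mu_{p^m}\hookrightarrow\mathrm{Tate}(q)$ provides an \'etale neighbourhood $\cV$ of $\infty$ on which $\cJ_m$, and therefore $\omega_m$, trivialises.

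On $\cV$ Pilloni's description identifies $\omega_m$ with the sheaf of $\kappa_m$-equivariant $\cO(\cB_m)$-valued functions, generated by $\tilde\kappa_m$; restricting to $\cU\subset\cB_m$ yields a generator $\eta$ of $\omega_\cU(\cV\times\cU)$. For $k\in\Z_{\geqslant 1}\cap\cU$ the character $\kappa_m$ specialises to $x\mapsto x^k$, so $\eta$ specialises to the $k$-equivariant function $z\mapsto z^k$ on the fibre; its value at $s_0\leftrightarrow 1$ is $1$, which is exactly the normalisation defining $(d^\times t)^{\otimes k}$ as a section of $\omega^{\otimes k}=\pi^{\an}_*(\cO_{\cI^{\an}})[k]$. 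Thus $\eta$ specialises to $(d^\times t)^{\otimes k}$ at every integral weight. Writing a Coleman family near the cusp in this trivialisation gives $\cF_{\mid\cV\times\cU}=\big(\sum_{n\geqslant 0}a_n(\cF)q^n\big)\cdot\eta$ with $a_n(\cF)\in\cO(\cU)$, upon identifying the completed stalk of $\cO_{\cX}\widehat\otimes\cO(\cU)$ along the cusp section with $\cO(\cU)\lsem q\rsem$; specialising at integral $k$ and using the previous sentence, $a_n(\cF)$ specialises to the classical Fourier coefficient $a_n(\cF_k)$, which is the asserted interpolation.

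For the $q$-expansion Principle, suppose $a_n(\cF)=0$ for all $n$. At every non-critical integral weight $k>s+1$ in $\cU$, Coleman's classicality theorem makes $\cF_k$ classical, and its vanishing $q$-expansion forces $\cF_k=0$ by the classical $q$-expansion Principle, which rests on the geometric connectedness of the modular curve of level $\Gamma_1(N)$. Such weights are Zariski-dense in the affinoid $\cU$, and $\cF$ lies in the finite free $\cO(\cU)$-module $M^{\dag,\leqslant s}_\cU$, so vanishing along a Zariski-dense set of fibres yields $\cF=0$. The hard part will be the geometric identification of the first step---checking that $d^\times t$ is the distinguished point of $\cJ_m$ over the cusp, i.e.\ that the Tate parametrisation is compatible with the canonical-subgroup data defining Pilloni's torsor; granting this, the specialisation of $\tilde\kappa_m$ is formal and the injectivity is a routine density argument.
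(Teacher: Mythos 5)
Your construction of the generator is the same as the paper's: both proofs take Pilloni's local model of $\omega_\cU$ near the ordinary cusp, exhibit the tautological section $\tilde{\kappa}_m$ (restricted to $\cU$) as a generator over an \'etale neighbourhood $\cV$ of $\infty$, and check that it specialises to $(d^\times t)^{\otimes k}$ at integral weights; the point you single out as ``the hard part'' --- that $d^\times t$ defines a point of $\cJ_m$ over the cusp, compatibly with the canonical-subgroup data, so that the trivialisation can be normalised by $d^\times t\mapsto 1$ --- is exactly what the paper invokes (tersely) when it says that ordinarity of $\mathrm{Tate}(q)$ allows one to choose $\cV$ with trivialisation given by $d^\times t$ on which $\tilde{\kappa}_\cU$ generates. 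Where you diverge is the injectivity statement. The paper gets it directly from the geometry: the trivialisation gives $\omega_\cU(\cV\times\cU)\simeq\cO(\cV)\widehat\otimes\cO(\cU)$, which injects into $\cO(\cU)\lsem q\rsem$ via the completed localisation $\cO(\cV)\to\cO^\wedge_{X^{\ord},\infty}$, so the $q$-expansion map is injective once one knows restriction from $X(v)\times\cU$ to $\cV\times\cU$ is injective (a connectedness statement for $X(v)$, left implicit). You instead reduce to classical weights: specialise at the Zariski-dense set of integral $k>s+1$, apply Coleman classicality and the classical $q$-expansion Principle to kill each fibre, and conclude from finite projectivity of $M^{\dag,\leqslant s}_\cU$ over the reduced affinoid $\cO(\cU)$. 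Both routes work; yours trades the rigid-analytic connectedness of $X(v)\times\cU$ for the (mild) input that formation of the slope-$\leqslant s$ part commutes with specialisation, so that vanishing of the weight-$k$ fibre really puts $\cF$ into $\gm_k M^{\dag,\leqslant s}_\cU$ --- you should say a word about that compatibility, but it is the same device the paper itself uses in the proof of Proposition~\ref{bad-Hecke-ops}. The paper's argument has the small advantage of proving injectivity of the $q$-expansion map on all of $\rH^0(X(v)\times\cU,\omega_\cU)$, not only on the finite-slope part.
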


\begin{proof} As already observed,  $\omega_{\cU}$  is locally isomorphic for the \'{e}tale topology on $X^{\ord}\ni \infty $ to the eigenspace of $\cO(\cU)$-valued locally analytic functions on $\Z_p^{\times}\cdot(1+p^m  \cO_{\C_p})$ which are $\Z_p^{\times}$-equivariant with respect to the action of the
universal character $\kappa_\cU: \Z_p^{\times} \to \cO(\cU)^{\times}$, a basis being given by the locally analytic character 
$\tilde{\kappa}_\cU$. Here $m$ is chosen so that $\cU\subset \cB_m$. 

Since $\mathrm{Tate}(q)$ is ordinary at $p$, one can choose a neighborhood $\cV$ for the \'{e}tale topology of the cusp $\infty$ with a local trivialization of $\omega$ given by $d^\times t$ and such that the section $\tilde{\kappa}_{\cU}$ generates $\omega_\cU(\cV \times \cU)$. The specialization of  $\tilde{\kappa}_{\cU}$ at any $k \in \Z_{\geqslant 1} \cap \cU$ corresponds under this construction to the canonical differential $(d^\times t)^{\otimes k} $ generating $\omega^{\otimes k}(\cV)$, thus providing the sought-for  $p$-adic  analytic interpolation. This yields the desired trivialization $\omega_\cU(\cV \times \cU) \simeq \cO(\cV) \widehat{\otimes} \cO(\cU) $ together with the natural injection $\cO(\cV) \widehat{\otimes} \cO(\cU) \hookrightarrow \cO(\cU) \lsem q \rsem$ given by the localization $\cO(\cV) \to \cO^\wedge_{X^{\ord},\infty}=\bar\Q_p \lsem q \rsem$ at $\infty$. In this manner we have associated to any Coleman family $\cF \in M^{\dag,\leqslant s}_\cU$ a $q$-expansion $\sum_{n  \geqslant 0} a_n(\cF) q^n \in \cO(\cU) \lsem q \rsem$ interpolating the $q$-expansions of its classical specializations, and thus satisfying a  $q$-expansion Principle. 
\end{proof}

\begin{rem} Using similar techniques one can prove a statement analogous   to  Proposition \ref{q-expprinciple}
at an arbitrary cusp of $X^{\ord}$. 
\end{rem}
\subsection{Basic global properties of $\cC$}
The eigencurve $\cC$ of tame level $N$ is admissibly covered   by  the affinoids attached to the  $\cO(\cU)$-algebras $\cT_{\cU}^{\leqslant s}$ generated by the Hecke operators   $T_\ell$, $\langle \ell\rangle$, $ \ell\nmid Np$ and $U_p$
acting on $M^{\dag,\leqslant s}_\cU$, where  $s\in \Q_{\geqslant 0}$ is arbitrary and  the  open affinoids $\cU$ form an admissible cover of $\cW_p$ as in \S\ref{ocmf}. 

Henceforth we will use the weight space $\cW$ representing the continuous homomorphisms: 
\[\Z_{p}^{\times}\times (\Z/N\Z)^\times \rightarrow \mathbb{G}_{m},\]
which is endowed with {\it shifted} forgetful map to $\cW_p$ and is locally generated over the latter by the diamond operators $\langle a\rangle$, 
$a\in(\Z/N\Z)^\times$. The shift is made so that $k\in\Z$ henceforth corresponds to the character $x\mapsto x^{k-1}$ of 
$\cW$,  and on the level of Iwasawa algebras is given by the automorphism of 
$\Z_p\lsem 1+2p\Z_p \rsem$ sending $[1+2p]$ to $(1+2p)[1+2p]$.

The eigencurve $\cC$ is reduced, and it follows from its construction that there exists a flat and locally finite morphism $\sw: \cC \rightarrow \cW$, called the weight map. Moreover $\sw$ is proper by \cite{diao-liu}. 
Thanks to the above shift, the classical weight $1$ forms, which are the focal point of our study, 
map under $\sw$ to finite order characters equal to the determinant of the corresponding Galois representation (pre-composed with the Artin reciprocity map). 

By construction of $\cC$, there exist bounded  global sections $\{ T_{\ell},U_{p}\}_{\ell \nmid Np} \subset \cO_{\cC}^{+}(\cC)$ such that the usual application ``system of eigenvalues'' 
\[ x \in \cC(\bar\Q_{p})  \mapsto \{T_{\ell}(x),U_{p}(x)\}_{\ell \nmid Np}\] 
is injective, and produces all systems of eigenvalues for $\{ T_{\ell},U_{p}\}_{\ell \nmid Np}$ acting on the space of overconvergent forms  with  coefficients in $\bar \Q_p$, of tame level $N$,  having weight in $\cW(\bar\Q_p)$ and  a non-zero $U_{p}$-eigenvalue.

A fundamental arithmetic tool in the study of the geometry of $\cC$ is the  universal $2$-dimensional pseudo-character  
\begin{equation}\label{pseudo-char}
\tau_{\cC}:\G_{\Q} \rightarrow  \cO_{\cC}(\cC),
\end{equation}
which is unramified at all $\ell\nmid Np$ and 
such that $\tau_{\cC}$ maps an arithmetic Frobenius  $\Frob_\ell$ to $T_{\ell}$. 
 This pseudo-character interpolates $p$-adically the traces of semi-simple $p$-adic Galois representations attached to the classical points of $\cC$. While these Galois representations are De Rham at $p$, the semi-simple $p$-adic Galois representation attached to an arbitrary specialization of $\tau_{\cC}$  is only trianguline at $p$.

\subsection{Classical points of $\cC$}
A point of  $\cW$ is said to be classical if its restriction to some open subgroup of $\Z_p^\times$ 
 is given by the homomorphism $(x\mapsto x^{k-1})$,
for some  $k\in \Z_{\geqslant 1}$ (such characters are  locally algebraic). 
The subset  $\cW^{\mathrm{cl}} \subset \cW(\bar\Q_p)$ of classical weights
 is very Zariski dense  in the sense that for any affinoid $\cU$ of $\cW$, the intersection 
  $\cW^{\mathrm{cl}} \cap \cU$  is either empty or is Zariski dense in $\cU$.  A classical point of $\cC$ always maps  to a point of $\cW^{\mathrm{cl}}$, but the converse in not necessarily true. 
  
In order to describe the classical points of $\cC$ let us first recall the notion of a $p$-stabilization. 
Let $f(z)=\sum_{n \geqslant 0 } a_n e^{2i\pi nz}$  be a primitive normalized eigenform 
 of weight $k\in \Z_{\geqslant 1}$,   central character 
$\varepsilon$, and  level $\Gamma_1(Mp^t)$, with $t \in \Z_{\geqslant 0}$ and $M$ dividing $N$. We distinguish the following two cases.  
\begin{enumerate}
\item If $t=0$, then the Hecke polynomial $X^2-a_pX+\varepsilon(p)p^{k-1}$ of $f$ at $p$ has two (necessarily non-zero, but not necessarily distinct) roots, denoted 
 $\alpha$ and $\beta$. The corresponding $p$-stabilizations $f_\alpha(z)=f(z) - \beta f(pz)$ and 
 $f_\beta=f(z) - \alpha f(pz)$ both have   level $\Gamma_1(M)\cap \Gamma_0(p)$ and define points $f_\alpha$ and   $f_\beta$ in $\cC^{\mathrm{cl}}$.  If those points are distinct, we call them $p$-regular, if not,  $p$-irregular. By an abuse of language we sometime say that $f$ itself is regular or irregular at $p$.
 
 \item If $t > 0$, then $f$ is already a $U_p$-eigenvector with eigenvalue $\alpha=a_p$.  If $f$ has finite slope ({\it i.e.},  $\alpha\ne 0$), then it defines a  point $f_\alpha=f \in\cC^{\mathrm{cl}}$. As the Hecke polynomial equals $X(X-\alpha)$, the point 
 $f$ is regular at $p$, . 
\end{enumerate} 

The set of classical points $\cC^{\mathrm{cl}}\subset \cC(\bar\Q_p)$ consists of all 
 points $f_\alpha$ as above (considered with coefficients in $\bar\Q_p$ via $\iota_p$)  as $k\in \Z_{\geqslant 1}$, $t\in \Z_{\geqslant 0}$ and   $M$ dividing $N$ vary. By Coleman's Control Theorem $\cC^{\mathrm{cl}}$ is very Zariski dense  in $\cC$.

The $p$-adic valuation of the $U_p$-eigenvalue of a point in $\cC(\bar\Q_p)$ is called its {\it  slope}. The slope of a classical weight $k$ point cannot exceed $k-1$ and is called {\it critical} when the equality is reached. 
The points having slope $0$ are called {\it ordinary}. The locus $\cC^{\ord}$ of $\cC$ where $\mid U_{p}\mid_p  = 1$ is open and closed in $\cC$, and  is called the ordinary locus. A formal model of $\cC^{\ord}$ is  given by the universal $p$-ordinary reduced Hida Hecke algebra of tame level $N$ generated by the Hecke operators $T_\ell$ for all primes $\ell \nmid Np$ and $U_p$.

\subsection{Hecke operators at primes dividing the level}
For each $\ell\mid N$ the module $M^{\dag,\leqslant s}_\cU$ is endowed with a $\cO(\cU)$-linear operator $U_\ell$, commuting with  $\cT_\cU^{\leqslant s}$. 
It can be either defined  geometrically as a correspondence between modular curves of levels prime to $p$, or by the usual formulas on $q$-expansions,  {\it i.e.},  using $p$-adic interpolation of classical forms. By adding those operators to $\cT_\cU^{\leqslant s}$ one can define the full eigencurve $\cC^{\mathrm{full}}$, whose ordinary part is directly related to the Hida Hecke algebras in their most classical definition. The  advantage of working with the full Hecke algebra would become transparent in \S\ref{sec:hida-duality}. The disadvantage is that this bigger algebra is not necessarily  reduced. Luckily one does not have to choose when working in a neighborhood of a classical point corresponding to newform,  as the next proposition shows that the two are locally isomorphic

\begin{prop}\label{bad-Hecke-ops}
Any $f\in\cC^{\mathrm{cl}}$ corresponding to a  newform of tame level $N$  has an affinoid neighborhood $\cV$ such that 
any $g\in \cV\cap \cC^{\mathrm{cl}}$ corresponds to a  newform of tame level $N$. Moreover, $\cC$ and $\cC^{\mathrm{full}}$ are  isomorphic locally at $f$, in particular $\cC^{\mathrm{full}}$ is reduced locally at $f$. 
\end{prop}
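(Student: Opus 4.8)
The map forgetting the operators $U_\ell$ for $\ell\mid N$ gives a finite morphism $j\colon\cC^{\mathrm{full}}\to\cC$, and the statement amounts to finding an affinoid $\cV\ni f$ over which (a) every classical point is a newform of tame level exactly $N$, and (b) each $U_\ell$ is already a section of $\cO_\cC$, so that $j$ restricts to an isomorphism over $\cV$ and reducedness descends from $\cC$. The plan is to control the prime-to-$p$ conductor of the Galois representation carried by $\cC$ and then to recognise $U_\ell$ as a Frobenius eigenvalue on inertia invariants.

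First I would pass to a genuine representation. Since $f$ is cuspidal, $\rho_f$ is irreducible, and because $\cC$ is a rigid space over $\Q_p$ the specialization of $\tau_\cC$ at $f$ is $\tr\rho_f$; thus the pseudo-character is residually irreducible at $f$, and by the theory of pseudo-representations one may, after shrinking, assume $\tau_\cC$ is the trace of a continuous $\rho_\cV\colon\G_\Q\to\GL_2(\cO_\cC(\cV))$. For each $\ell\mid N$ (necessarily $\ell\ne p$) consider the prime-to-$p$ Artin conductor exponent of $\rho_\cV$ at $\ell$. Its wild (Swan) part is locally constant in the family, while $\dim V^{I_\ell}$ is upper semicontinuous, so the conductor is maximal on an open locus and can only drop at isolated special points. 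Every classical point of $\cV$ is a newform of tame level dividing $N$, hence has conductor dividing $N$; as classical points are Zariski dense and $\rho_f$ has conductor exactly $N$, which is the maximal possible value, $f$ lies in the open locus of maximal conductor. Shrinking $\cV$, the prime-to-$p$ conductor is constant equal to $N$ throughout, which proves (a): every $g\in\cV\cap\cC^{\mathrm{cl}}$ is new of tame level $N$.

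Next I would produce $U_\ell$ as a function on $\cV$. Constancy of the conductor forces $\dim V^{I_\ell}$ and the Swan conductor to be constant on $\cV$, so the inertia invariants $V^{I_\ell}\subset\rho_\cV$ form a locally free $\cO_\cC(\cV)$-subsheaf of rank $0$ or $1$, stable under $\Frob_\ell$. I set $a_\ell:=\tr\!\left(\Frob_\ell\mid V^{I_\ell}\right)\in\cO_\cC(\cV)$, which vanishes in the rank $0$ (supercuspidal or ramified principal series) case. By local--global compatibility at $\ell\ne p$, at every classical $g\in\cV$ this $a_\ell$ specializes to the $\ell$-th Fourier coefficient of the newform attached to $g$, that is, to its $U_\ell$-eigenvalue.

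It remains to upgrade this pointwise equality to the operator identity $U_\ell=a_\ell$ on $M^{\dag,\leqslant s}_\cU$ localized at $f$; by faithfulness of the Hecke action this gives $\cT^{\mathrm{full}}_\cV=\cT_\cV=\cO_\cC(\cV)$ near $f$, whence $j$ is a local isomorphism and $\cC^{\mathrm{full}}$ is reduced at $f$ because $\cC$ is. I expect this last step to be the main obstacle, and not merely formal: the element $U_\ell-a_\ell\in\cT^{\mathrm{full}}_\cV$ vanishes at every classical point and so lies in the nilradical, but a finite birational morphism of reduced rigid curves that is bijective on a dense set of points need not be an isomorphism (the normalization of a cusp is the standard cautionary example), so pointwise matching is insufficient — one must check that the equality does not fail to first order along the non-classical directions of the generalized eigenspace $S^\dagger_{\sw(f)}\lsem f\rsem_0$. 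I would settle this by checking $U_\ell=a_\ell$ directly on $q$-expansions using the injectivity in Proposition \ref{q-expprinciple}: for a form whose classical specializations are all new at $\ell$ the Atkin--Lehner relation $a_{n\ell}=a_n a_\ell$ holds at every classical weight and for every $n$, and differentiating it along $\cC$ shows that multiplication by the Galois-theoretic $a_\ell$ and the coefficient-extraction operator $U_\ell$ induce the same map on $q$-expansions to all orders; equivalently, this is local--global compatibility at $\ell\ne p$ for the family $\rho_\cV$. Hence the nilpotent $U_\ell-a_\ell$ acts as zero, completing the argument.
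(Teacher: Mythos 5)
Your proposal follows the same overall architecture as the paper's proof: pass from the pseudo-character to a genuine representation $\rho_\cV$ using irreducibility of $\rho_f$, control the behaviour at each $\ell\mid N$ through the family Weil--Deligne representation of Bella\"iche--Chenevier, realize $U_\ell$ as a Galois-theoretic element $a_\ell=\tr(\Frob_\ell\mid V^{\I_\ell})$ of $\cO(\cV)$, and propagate the identity $U_\ell=a_\ell$ from a dense set of classical weights. Where you genuinely diverge is in part (a): the paper fixes an irreducible component, uses the constancy of $r_{\cV\mid\I_\ell}$ and a case analysis on the monodromy operator $\cN_\cV$, and must invoke the Ramanujan conjecture to exclude the configuration where $f$ is a ramified principal series sitting as a degenerate limit of twisted Steinberg points. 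Your semicontinuity argument for the prime-to-$p$ conductor (Swan part locally constant, $\dim V^{\I_\ell}$ upper semicontinuous, so the conductor can only drop on a closed locus not containing $f$) reaches the same conclusion without Ramanujan, at the price of leaning instead on Carayol's local--global compatibility in the form ``conductor of $\rho_g$ equals the tame level of the newform $g$'' --- which is exactly the statement that fails in the paper's cautionary weight $2$ level $\Gamma_0(\ell)$ Eisenstein example, so the cuspidality of $f$ is doing real work in your argument too. This is a legitimate and arguably more robust alternative for (a).

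The gap is in your final step. You correctly identify the crux --- that $U_\ell-a_\ell$ vanishing at classical points only places it in the nilradical of the possibly non-reduced $\cT^{\full}_\cV$ --- and you point at the right tools ($q$-expansion principle, density of classical weights), but the execution does not close. The relation $a_{n\ell}(\cG)=a_n(\cG)\,a_\ell$ ``at every classical weight'' is an identity for an individual newform, not for the weight-$k$ fibre of a family, which is a priori a linear combination of several generalized eigenvectors; ``differentiating the Atkin--Lehner relation along $\cC$'' is not a defined operation here. What is missing is the multiplicity-one input the paper supplies explicitly: by Coleman's classicality theorem and strong multiplicity one, at a Zariski-dense set of classical weights $k_i$ of non-critical slope the fibre of the localized module $M^{\dag,\leqslant s}_{\cU,\gm}$ is a direct sum of one-dimensional eigenspaces spanned by $p$-stabilized newforms of tame level exactly $N$, on each of which $U_\ell$ acts by the value of $a_\ell$ at the corresponding point of $\cV$. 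Only with this in hand does $(U_\ell-a_\ell)\cG$ have all Fourier coefficients vanishing at every $k_i$, whence it is zero by the $q$-expansion principle and reducedness of $\cO(\cU)$; faithfulness then gives $U_\ell=a_\ell$ in $\cT^{\full}_\cV=\cO(\cV)$, and reducedness of $\cC^{\full}$ at $f$ follows. (The paper packages the same mechanism slightly differently, via projectivity of $\cO(\cV^{\full})$ over $\cO(\cU)$ and $\bigcap_i\gm_i\cO(\cV^{\full})=0$; your $q$-expansion route avoids the projectivity statement but still needs the one-dimensionality of the fibres.)
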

\begin{proof} As $\rho_f$ is irreducible, by standard arguments (see for example \cite[Propositions~5.1]{bellaiche-dimitrov}) there exists an affinoid neighborhood $\cV$ of $f$ in $\cC$ and a continuous representation $\rho_{\cV}: \G_{\Q} \rightarrow  \GL_2(\cO(\cV))$ whose trace equals the pseudo-character  $\tau_{\cV}:\G_{\Q} \rightarrow  \cO(\cV)$ obtained from \eqref{pseudo-char}.

Fix a prime  $\ell$   dividing  $N$ and recall that $f$ is new at $\ell$. As the local and global Langlands correspondences for $\GL(2)$ are compatible, in order to show that $g\in \cV\cap \cC^{\mathrm{cl}}$ is new at $\ell$ as well, it suffices to show that the restrictions of  $\rho_{f}$ and $\rho_{g}$ to the inertia subgroup $\I_\ell$ at $\ell$ are isomorphic. To perform this part of the argument
we may restrict our study to the irreducible component  of $\cV$ containing $g$, {\it i.e.} we may and do temporarily assume that 
$\cV$ is irreducible. 
Let  $(r_\cV, \cN_\cV)$ be  the Weil-Deligne representation attached to $\rho_{\cV\mid \G_{\Q_\ell}}$ in \cite[Lemma~7.8.14]{bellaiche-chenevier-book}, and similarly let
$(r_g, \cN_g)$ denote  the Weil-Deligne representation attached to $\rho_{g\mid \G_{\Q_\ell}}$. 
By \cite[Lemmas~7.8.17]{bellaiche-chenevier-book} one knows that 
$\tau_{\cV\mid\I_\ell}$ and $r_{\cV\mid\I_\ell}$ are constant over $\cV$, isomorphic to  
$\tr(\rho_{f})_{\mid\I_\ell}$ and $r_{f\mid\I_\ell}$,  respectively. 

If $\cN_\cV=0$ then $\rho_{\cV}(\I_\ell)=r_{\cV}(\I_\ell)$ is finite and isomorphic to $\rho_f(\I_\ell)$. 

If $\cN_\cV\ne 0$ then,  after possibly shrinking the irreducible affinoid $\cV$, one can assume that  $\cN_\cV$ does not vanish over $\cV\backslash\{f\}$. 
It follows that any  classical $g\ne f$ in $\cV$  is given  at $\ell$ by the Steinberg representation twisted by a character $\chi$. 
One deduces then from  $\tr(\rho_{f})_{\mid\I_\ell}=2\chi_{\mid\I_\ell}$ that 

$\bullet$ either $f$ at $\ell$ is the Steinberg representation twisted by a character having the same restriction to $\I_\ell$ as $\chi$, 
in which case   $\rho_{g|\I_\ell}$ and $\rho_{f|\I_\ell}$ are isomorphic, 

$\bullet$ or $f$ at $\ell$ is a principal series attached to two characters $\chi_1$ and $\chi_2$ having the same restriction to $\I_\ell$ as $\chi$. By continuity of $\rho_{\cV\mid \G_{\Q_\ell}}$ and using the  density of  such $g$ in $\cV$, one deduces 
that $(\chi_1/\chi_2)(\Frob_\ell)=\ell^{\pm 1}$. This is impossible as the cuspform 
$f$  satisfies  the Ramanujan Conjecture, proved by P.~Deligne. 

So far we have proven that the restriction $r_{g\mid\I_\ell}$  and the rank of $\cN_g$ are 
both constant as $g$ varies over the classical points in a neighborhood $\cV$ of $f$ in $\cC$. 
In particular, all  points of $\cV^{\mathrm{cl}}$ are new at all primes $\ell\ne p$. 
By Coleman's Control Theorem  and the Strong Multiplicity One Theorem for $\GL(2)$  it follows that for any 
$g\in \cV\cap \cC^{\mathrm{cl}}$ of non-critical slope, the corresponding generalized eigenspace in $M_{\sw(g)}^{\dagger, \leqslant s}$ is one dimensional, generated by $g$.

We consider the affinoid neighborhood $\cU=\sw(\cV)$ of $ \sw(f)$ in $\cW$, and the affinoid neighborhood $\cV^{\mathrm{full}}$ of $f$ in $\cC^{\mathrm{full}}$ obtained by taking inverse image of $\cV$ under the natural projection $\cC^{\mathrm{full}}\to \cC$. As  the $\cO(\cU)$-algebra $\cO(\cV^{\mathrm{full}})$  acts faithfully on   
the  $M^{\dag,\leqslant s}_\cU$, a projective  $\cO(\cU)$-module of finite rank, it follows that $\cO(\cV^{\mathrm{full}})$ is also projective as $\cO(\cU)$-module. It follows that $\bigcap_{i\in \Z} \gm_i \cO(\cV^{\mathrm{full}})=\{0\}$, where 
$(\gm_i)_{i\in \Z}$ is any Zariski dense set of maximal ideals of $\cO(\cU)$. Letting  the $\gm_i$'s 
correspond to classical weights $k_i$ which are large enough (with respect to the slope $s$), one deduces that  
$\cV^{\mathrm{full}}$ is reduced. To see that  any nilpotent element of $\cO(\cV^{\mathrm{full}})$ belongs to 
$\gm_i \cO(\cV^{\mathrm{full}})$ we recall that  $\cO(\cV^{\mathrm{full}})/\gm_i \cO(\cV^{\mathrm{full}})$ is a product of fields indexed by  $\cV\cap \sw^{-1}(k_i)$.  

It remains to show that the natural inclusion of reduced $\cO(\cU)$-algebras  $\cO(\cV)\to \cO(\cV^{\mathrm{full}})$ is an isomorphism, {\it i.e.} that $U_\ell\in \cO(\cV)$ for all $\ell$ dividing $N$.
One proceeds exactly as in \cite[Proposition~7.1]{bellaiche-dimitrov}   using $\rho_\cV$ 
to construct  an element of $\cO(\cV)$ whose value at each $g\in \cV\cap \cC^{\mathrm{cl}}$ is given by 
$U_\ell(g)$. 
\end{proof}

In view of Proposition~\ref{bad-Hecke-ops}, the irreducible components of $\cC$ containing a $p$-ordinary newform $f$ of tame level $N$, are in bijection with the minimal primes of the $p$-ordinary  Hida Hecke algebra of tame level $N$  which are contained in the 
height one prime attached to $f$ (see \cite{dim-durham} for more details).

\begin{rem} Analogues of Proposition \ref{bad-Hecke-ops} have  also been studied for $f\in \cC^{\mathrm{cl}}$ 
which are not cuspidal, but still are cuspidal-overconvergent, {\it i.e.} belong to the cuspidal eigencurve $\cC^{\cusp}$ defined in 
\S\ref{sec:hida-duality}. If $f$ has weight $1$ then it corresponds to a $p$-irregular  Eisenstein points and an exact analogue 
is proven in \cite[Proposition~4.4]{BDPozzi}. If $f$ has weight $k \geqslant 2$ then corresponds to a critical stabilization of an 
Eisenstein  series, and one can argue similarly using the Galois representation $\rho_\cV$ constructed by  
J.~Bella\"iche and G.~Chenevier \cite{bellaiche-chenevier}. The only potential problem is when 
the family  $\cV$ is generically Steinberg at $\ell$, while  $\rho_f$  is unramified at $\ell$, and the  study of the Weil-Deligne representation at $\ell$ then implies that $k=2$ (Eisenstein series do not satisfy the Ramanujan conjecture!).
This case was studied in detail in the PhD thesis of D. Majumdar \cite{majumdar}. 
A case which is  particularly piquant is that of the unique  weight $2$  level $\Gamma_0(\ell)$ 
Eisenstein series, which is {\it not} old at $\ell$, despite of $\rho_f$  being unramified at $\ell$, as the 
weight $2$ level  $1$ Einsenstein series  is not classical. 
\end{rem}

\subsection{Cuspidal Hida duality} \label{sec:hida-duality}
The    $\cO(\cU)$-submodule of $M^{\dag,\leqslant s}_\cU$  of cuspidal Coleman families   is defined as 
\[S^{\dag,\leqslant s}_\cU= \varinjlim_{v>0} \rH^0(X(v) \times \cU,\omega_\cU(-D))^{\leqslant s}, \] 
where  $D$ is the cuspidal divisor of  the ordinary locus $X^{\ord}$. 
Note that  it is $\cT_\cU^{\leqslant s}$-stable and one defines  
the cuspidal Hecke $\cO(\cU)$-algebra $\cT_\cU^{\cusp,\leqslant s}$ as the quotient of $\cT_\cU^{\leqslant s}$ acting faithfully on it. 
Using $\cT_\cU^{\cusp,\leqslant s}$ instead of $\cT_\cU^{\leqslant s}$  one defines the cuspidal eigencurve 
$\cC^{\cusp}$ which is endowed with a closed immersion $\cC^{\cusp}\hookrightarrow \cC$ of  reduced flat rigid curves over $\cW$. 

Let $f$ be a classical cuspidal point in $\cC$. As  recalled in \S\ref{ocmf},   $\sw(f)$ has a neighborhood  $\cU$ such that 
 $S^{\dag,\leqslant s}_\cU$ and $\cT_\cU^{\cusp,\leqslant s}$ are both free of finite rank as $\cO(\cU)$-modules. 
\begin{prop}\label{hidaduality}
Hida's  $\cO(\cU)$-linear pairing 
\[\langle \cdot , \cdot  \rangle: \cT_\cU^{\cusp,\leqslant s} \times S^{\dag,\leqslant s}_\cU\to \cO(\cU)\]
 sending $(T,\cG)\in \cT_\cU^{\cusp,\leqslant s} \times S^{\dag,\leqslant s}_\cU$  to    $\langle T, \cG \rangle= a_1(T(\cG))$ is  a perfect duality.
\end{prop}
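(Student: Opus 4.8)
The plan is to prove perfectness of Hida's pairing by identifying it with a more standard duality between a Hecke algebra and its module of modular forms. The pairing $\langle T, \cG \rangle = a_1(T(\cG))$ is the classical Hecke pairing, and its nondegeneracy on both sides follows from the $q$-expansion Principle together with the fact that $\cT_\cU^{\cusp,\leqslant s}$ acts faithfully on $S^{\dag,\leqslant s}_\cU$. Since both modules are free of finite rank over $\cO(\cU)$ by the choice of $\cU$, it suffices to prove that the pairing is nondegenerate on each side after base change to the fraction field, or even at a Zariski-dense set of classical specializations; perfectness as a pairing of finite free modules will then follow by a standard specialization argument.

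I would carry this out in the following steps. First I would record the two formal consequences of the definition: for any Hecke operator $S \in \cT_\cU^{\cusp,\leqslant s}$ one has $\langle T, S\cG\rangle = \langle TS, \cG\rangle$, so the pairing is $\cT_\cU^{\cusp,\leqslant s}$-equivariant, and $\langle T, \cG\rangle = a_1(T\cG)$ is exactly the coefficient extraction that makes $a_n(\cG) = \langle T_n, \cG\rangle$ for the Hecke operator $T_n$ (using the usual recursion relating $a_n$ to the $T_n$). Second, I would establish left-nondegeneracy: if $\langle T, \cG\rangle = 0$ for all $\cG$, then $a_1(TS\cG) = \langle TS, \cG \rangle = 0$ for every $S$ and every $\cG$, hence all Fourier coefficients of $T\cG$ vanish; by the $q$-expansion Principle of Proposition~\ref{q-expprinciple} this forces $T\cG = 0$ for all $\cG$, so $T = 0$ by faithfulness of the action. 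Third, I would establish right-nondegeneracy symmetrically: if $\langle T, \cG\rangle = 0$ for all $T$, then $a_n(\cG) = \langle T_n, \cG\rangle = 0$ for all $n \geqslant 1$, whence $\cG = 0$ again by the $q$-expansion Principle.

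The previous two steps give a pairing over $\cO(\cU)$ whose induced maps $\cT_\cU^{\cusp,\leqslant s} \to \Hom_{\cO(\cU)}(S^{\dag,\leqslant s}_\cU, \cO(\cU))$ and $S^{\dag,\leqslant s}_\cU \to \Hom_{\cO(\cU)}(\cT_\cU^{\cusp,\leqslant s}, \cO(\cU))$ are injective. To upgrade injectivity to \emph{perfectness}, I would argue that these two free $\cO(\cU)$-modules have the same rank and that one of the induced maps is an isomorphism. The cleanest route is to work over the total fraction ring, or better to specialize at classical weights: for $k \in \Z_{\geqslant 1} \cap \cU$ large enough relative to $s$, Coleman's Control Theorem identifies the fibre of $S^{\dag,\leqslant s}_\cU$ with the space of classical cusp forms of weight $k$ and slope $\leqslant s$, on which the classical Hida/Hecke duality is known to be perfect; hence the determinant of the Gram matrix of $\langle\cdot,\cdot\rangle$ in chosen $\cO(\cU)$-bases is a nonzero element of $\cO(\cU)$ that specializes to a unit at a Zariski-dense set of weights.

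\textbf{The main obstacle} will be the final perfectness upgrade rather than the nondegeneracy, since over a ring like $\cO(\cU)$ injectivity of $M \to M^\vee$ does not by itself yield an isomorphism. The delicate point is to rule out that the Gram determinant, while nonzero, fails to be a unit and cuts out a proper closed subset of $\cU$. I expect this is handled by shrinking $\cU$ around $\sw(f)$: by Proposition~\ref{bad-Hecke-ops} the local situation at $f$ is well-controlled and $S^{\dag,\leqslant s}_\cU$ is free, so after shrinking one arranges that the Gram determinant has no zeros on $\cU$, making it a unit in $\cO(\cU)$ and the pairing perfect. One must check that the control-theorem comparison with classical forms is compatible with Hida's normalization of the pairing at the relevant specializations, which is where the geometric $q$-expansion of Proposition~\ref{q-expprinciple} does the essential bookkeeping.
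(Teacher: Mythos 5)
Your setup is sound and coincides with the paper's: the identity $\langle T_nT,\cG\rangle = a_n(T\cG)$ together with the $q$-expansion Principle of Proposition~\ref{q-expprinciple} gives injectivity of both induced maps $S^{\dag,\leqslant s}_\cU \to \Hom_{\cO(\cU)}(\cT_\cU^{\cusp,\leqslant s},\cO(\cU))$ and $\cT_\cU^{\cusp,\leqslant s} \to \Hom_{\cO(\cU)}(S^{\dag,\leqslant s}_\cU,\cO(\cU))$, exactly as in the paper (your detour through faithfulness of the Hecke action is harmless but unnecessary, since $T\cG=0$ for all $\cG$ already says $T=0$ in an algebra defined as acting faithfully).

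The final upgrade to perfectness, however, has a genuine gap. Your plan is to show the Gram determinant is a unit by specializing at classical weights $k$ large relative to $s$ and then ``shrinking $\cU$ around $\sw(f)$.'' Shrinking a neighborhood of $\sw(f)$ can only remove zeros of the Gram determinant located \emph{away} from $\sw(f)$; it cannot remove a zero at $\sw(f)$ itself, and nothing in your argument excludes that possibility. (Compare the pairing $(x,y)\mapsto Xxy$ on $\cO(\cU)\times\cO(\cU)$: injective on both sides, perfect at every specialization $X=c\ne 0$, yet not perfect near $X=0$.) Worse, the point of interest in this paper is a weight~$1$ point, where the slope is critical and Coleman's Control Theorem gives you no comparison with a classical space, so your Zariski-dense set of ``good'' weights systematically avoids the one weight that matters. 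The missing idea is to prove nondegeneracy of the \emph{residual} pairing at $\sw(f)$: specializing at $\cU=\{\sw(f)\}$, both induced maps are injective maps between finite-dimensional $\bar\Q_p$-vector spaces in the two opposite directions, which forces $\dim S^{\dag,\leqslant s}_{\sw(f)} = \dim \cT^{\cusp,\leqslant s}_{\sw(f)}$ and hence that both are isomorphisms; one then concludes for general $\cU$ by localizing at $\sw(f)$ and applying Nakayama to the cokernel. This dimension-count step is precisely what replaces your unverifiable claim about the Gram determinant.
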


\begin{proof} By Proposition \ref{bad-Hecke-ops} and the abstract recurrence relations between Hecke operators,
one know that for all $n\in \Z_{\geqslant 1}$ one has $T_n\in  \cT_\cU^{\cusp,\leqslant s}$. 
As 
\[\langle T, T_n(\cG) \rangle=\langle T_n T, \cG \rangle=a_1(T_n T(\cG))=a_n(T(\cG))\] 
for any $n \geqslant 1$, the $q$-expansion Principle from Proposition~\ref{q-expprinciple} shows that  the following natural $\cO(\cU)$-linear maps are  injective
\begin{align} \label{duality}
S^{\dag,\leqslant s}_\cU \longrightarrow  \Hom_{\cO(\cU)\text{-mod}}(\cT_\cU^{\cusp,\leqslant s},\cO(\cU)) \text{ , } & \cG \mapsto (T \mapsto \langle T, \cG \rangle)\\
  \cT_\cU^{\cusp,\leqslant s} \longrightarrow  \Hom_{\cO(\cU)\text{-mod}}(S^{\dag,\leqslant s}_\cU,\cO(\cU))  \text{ , }  &T \mapsto (\cG  \mapsto \langle T, \cG \rangle). 
\end{align} 
 In particular, if $\cU=\{\sw(f) \}$ then the above maps  are isomorphisms, as  $S^{\dag,\leqslant s}_{\sw(f)}$ and $\cT_{\sw(f)}^{\cusp, \leqslant s}$  are vector spaces of  finite dimension. 
For general $\cU$, as  $S^{\dag,\leqslant s}_\cU$ and $\cT_\cU^{\cusp,\leqslant s}$ are  free $\cO(\cU)$-modules of finite rank, 
 we proceed by localization at  $\sw(f)$. By Nakayama's lemma,  it suffices to  show that the cokernel of \eqref{duality} vanishes residually, which follows from the surjectivity in the particular case $\cU=\{\sw(f) \}$ that we already considered. 
\end{proof}

\begin{rem}
Similar techniques were used in \cite{BDPozzi} to establish a perfect duality between $M_\cU^{\dag,\leqslant s}$ and 
$\cT_\cU^{\leqslant s}$ locally at a point corresponding to a weight $1$  Eisenstein series irregular at $p$, the issue being the control of the constant terms. To that effect,   one  introduces the notion of  evaluation for Hida families at all cusps of $X^{\ord}$
and one  establishes a ``fundamental exact sequence''. 
\end{rem}

\section{Generalized eigenforms at classical points of the eigencurve}\label{geomhist}

\subsection{Geometry of  $\cC$ at classical points  of weight at least $2$} \label{weight-geq-2}
Recast in rigid geometry, Hida's famous Control Theorem \cite{Hida86} states that any  point of $\cC^{\ord}$ having classical weight $k\geqslant 2$ is classical. 
Note that ordinary forms of weight with $k\geqslant 2$ are always regular at $p$ and, when a second $p$-stabilization exists,  it  necessarily has critical slope. 
Coleman's Control Theorem generalizing  Hida's result,   states that any  non-critical point of $\cC$ having weight in  $\cW^{\mathrm{cl}}$ is classical
(see \cite{coleman-ocmf}). Those results imply that $\cC$ is \'{e}tale over  $\cW$ at any non-critical, $p$-regular classical point of weight $k\geqslant 2$, hence it is also smooth at these points. There are no known examples of classical forms of weight $k\geqslant 2$ which are $p$-irregular,  {\it i.e.},  for which the Hecke polynomial  at $p$  has a double root.
There are only three cases in which $\sw$ could fail to be \'{e}tale at a classical  point $f$ of weight $k\geqslant 2$ and regular at $p$, 
 potentially providing a cuspidal-overconvergent generalized eigenform.  

\begin{enumerate}
\item {\bf Critical  (a.k.a. evil) Eisenstein points.} Bella\"iche and Chenevier   proved in \cite{bellaiche-chenevier} that $\cC$ is smooth
and is conjecturally \'{e}tale over $\cW$ at such points. 

\item {\bf Critical  CM points.}  Bella\"iche showed  in  \cite{BelCM} that the eigencurve is smooth, although ramified over the weight space, at such points. He further showed that Jannsen's conjecture in Galois cohomology implies that the ramification degree  is precisely $2$. 

\item {\bf Critical  non-CM points.} Breuil and Emerton proved in  \cite{breuil-emerton} (see also \cite{ghate} for a partial 
result and \cite{bergdall} for a different proof) that $\sw$ ramifies at a classical weight $k\geqslant 2$ point of critical slope 
if, and only if, there exists  an ordinary companion form, in which case the restriction to $\G_{\Q_p}$ of the attached $p$-adic Galois representation splits. It is a folklore conjecture,  attributed to R.~Coleman and to R.~Greenberg, that such non-CM points should not exist (see for example \cite{castella-wang}). 
\end{enumerate}

Note that if a $p$-irregular classical weight $k\geqslant 2$  points were to exist, they would be of non-critical  slope 
and Coleman's Classicality Theorem \cite{coleman-ocmf} would imply that the corresponding generalized eigenspace  consists only  of classical 
forms. Thus the second case above is (conjecturally) the only one yielding cuspidal-overconvergent generalized eigenforms, and their $q$-expansions have recently been  computed by Hsu  \cite{hsu} (note that the technical condition preceding Theorem 1.1 in {\it loc. cit.} appears to be superfluous). 

\subsection{Geometry of  $\cC$ at classical points  of weight $1$ and Hida theory}
Classical  weight $1$ points in $\cC$ all belong to $\cC^{\ord}$ and all have critical slope, hence Hida's  Control  Theorem cannot be applied them and  a specializations of a Hida family in weight $1$  need not a  classical eigenform. 
According to a result of E.~Ghate and V.~Vatsal \cite{ghate-vatsal}, only  Hida families with complex multiplication (CM) 
admit infinitely many classical weight $1$  specializations. An explicit bound for the number of classical weight $1$  specializations of a non-CM family can be found in   \cite{dim-ghate}. 

 Deligne and Serre attached in \cite[Proposition~4.1]{deligne-serre} to any  weight $1$ newform  $f$  an irreducible Galois representation  $\rho_f:\G_{\Q} \rightarrow  \GL_{2} (\bar\Q)\xrightarrow{\iota_p} \GL_{2} (\bar\Q_p)$ having 
finite image. As well-known (see for example \cite{dim-ghate}),  the projective image is either dihedral,  or else is isomorphic to one of the groups $A_4$, $S_4$ or $A_5$ in which case the form (or the corresponding point on $\cC$) is referred to as exotic. 
When the projective image is dihedral, then $f$ has real multiplication (RM) or 
complex multiplication (CM),  depending on whether the corresponding quadratic extension of $\Q$  is real or imaginary.  
Note that, when the projective image is the Klein four group, then $f$ has multiplication by three quadratic fields (two imaginary and one real). 

One motivation for studying the geometry of the eigencurve at weight $1$ points arises from the question of determining whether there is a unique, up to Galois conjugacy,  Hida family  specializing to a given weight $1$ eigenform $f$. By
Proposition~\ref{bad-Hecke-ops} the 
minimal primes in Hida's $p$-ordinary Hecke  algebra which are contained in the height $1$ prime attached to $f$  are in bijection with the irreducible components of $\cC$ containing $f$. It follows that if   $\cC$ is smooth at $f$, then there exists a   unique, up to Galois conjugacy,  Hida family specializing to  $f$. 
It is observed in \cite{dim-ghate} that any classical weight $1$ form  of Klein type and irregular at $p$ is contained in two Hida families having CM by different imaginary quadratic fields, for which reason they cannot be  Galois conjugates. 
It turns out that this is part of a more general phenomenon. Indeed, it is proven in 
\cite{betina-dimitrov} that any  weight $1$  form irregular at $p$ and having CM by $K$ belongs to exactly  
$3$ or $4$ irreducible components of $\cC$, exactly $2$ out of which have CM by $K$. 
In particular the uniqueness, up to Galois conjugacy,  of the Hida family systematically fails at such weight $1$ classical points.

While conjecturally  all classical points of weight $k\geqslant 2$ are  expected to be $p$-regular (see \cite{coleman-edihoven}), the Chebotarev Density theorem applied to  
$\rho_f$, shows that for each classical   weight $1$ point $f$ there are infinitely many irregular primes $p$, providing many classical points at which $\sw$ is not \'{e}tale (and not even smooth if $f$ has CM).  

The main result of \cite{bellaiche-dimitrov} asserts that $\cC$ is smooth at any classical weight $1$ point $f$  which is regular  at $p$. 
Furthermore,  the weight map $\sw$ is not \'{e}tale at $f$ if, and only if, $f$ has real multiplication  (RM) by a  quadratic field   in which $p$ splits. The next subsection is devoted to this case. 

\subsection{Regular RM case}\label{RM-case} 

Let $f$ be a $p$-stabilization of a $p$-regular weight $1$ newform of level $N$ having  multiplication by a real quadratic  field in which $p$ splits, 
and let $\gm$  denote the corresponding maximal ideal of  the  Hecke algebra. 
In this case the classical subspace of the generalized eigenspace $S_{\sw(f)}^\dagger\lsem f \rsem\subset 
S_{\sw(f)}^{\dagger}$ is given by the line 
$\bar\Q_p\cdot f=S_1(Np)[\gm]$. Moreover a natural supplement of $\bar\Q_p\cdot f$ in $S_{\sw(f)}^\dagger[\gm^2]$
is given by the subspace  $S_{\sw(f)}^\dagger[\gm^2]_0$ of cuspforms whose  first Fourier coefficient  vanishes ($a_1=0$). 
As $S_{\sw(f)}^\dagger[\gm^2]_0$ is naturally isomorphic to the relative tangent space of $\cC$ over $\cW$ at $f$
 ({\it i.e}, the tangent space of the fiber of $\sw^{-1}(\sw(f))$ at $f$), the results of \cite{bellaiche-dimitrov} show that $S_{\sw(f)}^\dagger[\gm^2]_0$
 is a line, having a  basis $f^\dagger$. Using a  cohomological computations from \cite{bellaiche-dimitrov}, 
  H.~Darmon, A.~Lauder and V.~Rotger determine in \cite{DLR-Adv}   the precise $q$-expansion of this genuine overconvergent generalized eigenform $f^\dagger$. They also draw some parallels with the famous Hilbert's twelfth problem which remains unsolved for   real quadratic fields.

We use the present opportunity to observe that, contrarily to what was claimed in \cite{bellaiche-dimitrov},  based on a  
misinterpretation of a result by Cho and Vatsal \cite{cho-vatsal},  the expected equality $S_{\sw(f)}^\dagger\lsem f\rsem=S_{\sw(f)}^\dagger[\gm^2]$ remains an  open question.  It is equivalent to showing that the ramification index of $\sw$ at $f$ equals $2$ (see \cite{betina-CJM} for a thorough study of this case).

The geometry of the eigencurve is expected to  be more intricate at a weight $1$  points irregular at $p$ and to have
fascinating   applications in Iwasawa theory. 

\subsection{Eisenstein case} \label{eis-sec}
We refer to  \cite{BDPozzi}  for a detailed study of this case. 
A $p$-regular weight $1$ Eisenstein point belongs to a unique irreducible component of $\cC$ which is 
Eisenstein and \'{e}tale over the weight space. A $p$-irregular weight $1$ Eisenstein point $f$ is 
 attached to an odd primitive Dirichlet character $\phi:(\Z/N\Z)^\times\to \bar\Q_p^\times$ such that $\phi(p)=1$. 
 In contrast with the $p$-regular case, $f$  belongs to exactly two Eisenstein components. Moreover $f$  is cuspidal-overconvergent, {\it i.e.}
vanishes at all cusps of the multiplicative ordinary locus of the modular curve $X(\Gamma_0(p) \cap \Gamma_1(N))$ corresponding  to the  $\Gamma_0(p)$-orbit of $\infty$, hence $f$  belongs to the cuspidal eigencurve $\cC^{\cusp}$. It   is 
shown in {\it loc.cit.} that  $\cC^{\cusp}$ is  \'{e}tale  at $f$ over the weight space,  hence there exists a unique, up to a Galois conjugacy, cuspidal Hida family $\cF$ specializing to $f$. 
 It is also shown that $\cC^{\cusp}$ intersects transversally each of the two Eisenstein components containing $f$ 
(note that the evil weight Eisenstein series of weight $\geqslant 2$ do not belong to Eisenstein components). 
Furthermore, one computes in  {\it loc.cit.} the  $q$-expansions of a basis $\{f^\dagger_{1,\phi}, f^\dagger_{\phi,1}\}$ 
of the space of genuine overconvergent generalized eigenforms  in terms of $p$-adic logarithms of algebraic numbers, 
and one remarks that these forms are {\it  not} cuspidal-overconvergent. Finally, the expression of the constant term 
 $a_0(f^{\dagger}_{1,\phi})=(\cL(\phi) +\cL(\phi^{-1})) \frac{L(\phi,0)}{2}$, where $\cL(\phi)$ denotes the  cyclotomic $\cL$-invariant 
 appearing in the derivative at a   trivial zero of the Kubota--Leopoldt $p$-adic $L$-function $L_p(\phi\omega_p,s)$, allows to give a geometric flavored   proof of Gross' formula $L_p'(\phi\omega_p,0)=\cL(\phi)\cdot L(\phi, 0)$ (see \cite[\S5]{BDPozzi}).

\subsection{The $p$-irregular weight one case}\label{sec-irregular}
Let us first say that this case  is still   the  subject of active  research. 
A weight one  newform  which is irregular at $p$ yields a unique  point $f$ on~$\cC$. 

Let  $\cT$ be the completed local ring of $\cC$ at $f$ and  $S^{\dag}_{\gm}$ be the $\cT$-module  obtained by localizing and completing
 $S^{\dagger, \leqslant s}_\cU$ at the maximal ideal $\gm$ of  $\cT_{\cU}^{\leqslant s}$ corresponding to the system of Hecke eigenvalues of $f$, where $\cU$ is an admissible  affinoid of $\cW$ containing  $\sw(f)$.  The localization at $\gm$  of the pairing defined in  Proposition  \ref{hidaduality} gives rises to a perfect pairing 
 \begin{equation} \cT \times S^{\dag}_\gm\to\Lambda , \text{ given by } \langle h, \cG \rangle= a_1(T\cdot \cG) \in \Lambda,  \end{equation}
 where $\Lambda$ is the completed local ring of $\cW$ at $\sw(f)$. Specializing in weight $\sw(f)$, corresponding to the maximal ideal 
 $(X)$ of $\Lambda$, yields  a natural isomorphism
 \begin{equation}
 S_{\sw(f)}^{\dag}\lsem f \rsem=S^{\dag}_{\gm}/\gm_{\Lambda}S^{\dag}_{\gm} \xrightarrow{\sim} \Hom_{\bar\Q_p}\left(\cT/ (X\cdot \cT), \bar\Q_p\right). 
 \end{equation}
Since $\cT/ X\cdot \cT$ is an Artinian  $\bar\Q_p$-algebra, the space  
$S_{\sw(f)}^{\dag}\lsem f \rsem$ of overconvergent
weight $\sw(f)$ generalized eigenforms is by definition the union over all 
$i \geqslant 1$ of its  subspaces $S_{\sw(f)}^{\dag}[\gm^i]$ annihilated by the ideal $\gm^i$.

Let us observe that the classical subspace
of the generalized eigenspace $S_{\sw(f)}^\dagger\lsem f\rsem$ is given by the plane 
$\bar\Q_p\cdot f(z)\oplus \bar\Q_p\cdot  f(pz) =S_1(\Gamma_0(p) \cap \Gamma_1(N))[\gm^2]$ which 
has a natural supplement $S_{\sw(f)}^\dagger[\gm^2]_0$ in $S_{\sw(f)}^\dagger[\gm^2]$, consisting of cuspforms whose  first and $p$-th Fourier coefficients both vanish ($a_1=a_p=0$). As in \S\ref{RM-case} one can reasonably conjecture that $S_{\sw(f)}^\dagger\lsem f\rsem=S_{\sw(f)}^\dagger[\gm^2]$.

In \cite{DLR4}   Darmon, Lauder and Rotger constructed a  map  
\begin{equation}\label{DLR-map}
S_{\sw(f)}^\dagger[\gm^2]_0 \to \rH^1(\Q,\ad^0\rho)
\end{equation}
and conjectured that it is an isomorphism. A first evidence was found by Hao Lee \cite{lee} when $f$ is of  Klein type. The only other case where a full study of the local geometry has been successfully completed is the case of a $p$-irregular
weight $1$ CM form, presented in the following subsection.

\subsection{The $p$-irregular CM case}\label{CM-case}
We will use without recalling the notations and assumptions introduced in the paragraph preceding Theorem \ref{main-q-exp}.

Since $\psi\ne \bar\psi$, the reducibility $\rho_{f\mid \G_K}= \psi\oplus \bar\psi$ allows to choose a basis of eigenvectors $(e_1,e_2)$ which is uniquely defined 
 up to individual scaling.  
 Using the complex conjugation $\tau$ to further impose that $e_2 = \rho(\tau) e_1$  determines projectively uniquely this basis
 and one has: 
 \begin{equation}\label{rho}
\rho_{f\mid \G_K} = \left(\begin{matrix} \psi & 0 \\ 0 & \bar\psi \end{matrix}\right) 
\qquad \rho_{f \mid \G_\Q \setminus \G_K} = \left(\begin{matrix} 0 & \psi(\cdot \tau) \\ \psi(\tau \cdot) & 0 \end{matrix}\right).
\end{equation}

The local-global compatibility for $\rho_f=\Ind^\Q_K \psi$ yields that 
 \begin{equation}\label{qexp-f}
 a_\ell=\begin{cases} \psi(\gl)+\psi(\bar\gl) &,  \text{ if } (\ell)=\gl\bar\gl \ne (p) \text{ splits in  } K, \\
 0 &,  \text{ if } \ell \text{ is inert in } K, \\
\psi(\gl) &,  \text{ if } \gl\mid \ell\mid p D, \end{cases}
\end{equation}
which together with the usual recurrence relations $a_{\ell^{r+1}}=a_\ell\cdot a_{\ell^r}-\varepsilon(\ell)a_{\ell^{r-1}}$, for 
$r\in\Z_{\geqslant 1}$, and $a_{m n}= a_m a_n$, for $m, n$ relatively prime, uniquely determine  $f$. 
The above formulas are understood with the convention that $\psi(\gl)=0$, if $\gl$ divides the conductor of $\psi$, and similarly  
$\varepsilon(\ell)=0$, if $\ell$ divides the conductor of $\rho_f$. 

It has been shown in \cite{betina-dimitrov} that in addition to belonging to two components of $\cC$ having CM by $K$, $f$ also belongs to   one or two other components,  {\it i.e.}, $\cO_{\cC,f}$, as well as the Hida Hecke algebra localized at $f$, 
 have exactly three or four minimal primes.   According to Hida's work in Iwasawa theory,  points lying at the intersection of  CM and non-CM families are expected to correspond to zeros of anti-cyclotomic Katz $p$-adic $L$-functions. 
In our situation, we are in the presence of a so-called ``trivial'' zero and, prior to  {\it loc.cit.}, 
 one ignored whether this zero was simple or not. Indeed,  there is no an ``anti-cyclotomic'' analogue of the famous Ferrero--Greenberg Theorem on the 
Kubota--Leopoldt $p$-adic $L$-functions of a Dirichlet character encountered in the Eisenstein case described in \S\ref{eis-sec}. 
By using  $p$-adic geometry, commutative algebra and  Galois theoretic tools together, it is shown in {\it loc.cit.} is that these
``anti-cyclotomic'' trivial zeros are simple whenever a certain $\cL$-invariant does not vanish, as predicted by the  Strong Four Exponentials Conjecture.

A corollary of the main results of \cite{betina-dimitrov} is a proof of the Darmon--Lauder--Rotger Conjecture on \eqref{DLR-map} being an isomorphism, 
for all $p$-irregular  weight $1$ CM forms.  In the next section we will compute the $q$-expansions of a basis $\{f^\dagger_{\cF}, f^\dagger_{\Theta}\}$  of the 
genuine generalized eigenspace $S^\dagger[\gm^2]_0$.

\section{Overconvergent \texorpdfstring{$q$}{q}-expansions at \texorpdfstring{$p$}{p}-irregular CM forms} \label{sec-proofs}

The purpose of this section is to prove Theorems \ref{main-q-exp}  and \ref{thm-cross-ratio} which are a natural  extension of the results of \cite{betina-dimitrov}. 
 We keep the setting and  notations  from \S\ref{CM-case} and we let  $\nu=2$ if $p=2$, and $\nu=1$ otherwise.  Define 
 \[ 
  \cL=\frac{\cLm(\varphi)}{\log_p(1+p^{\nu})\cdot(\cLm(\varphi)+\cLm(\bar\varphi))}, \,\,\,
 \bar\cL=\frac{\cLm(\bar\varphi)}{\log_p(1+p^{\nu})\cdot(\cLm(\varphi)+\cLm(\bar\varphi))}.  \]

 \subsection{Infinitesimal non-CM Hida families} 
 
Under the assumption \eqref{generic-CM}, the results of  \cite{betina-dimitrov}  recalled in \S\ref{CM-case}  imply that  $f$ belongs to 
 exactly four irreducible components of $\cC$, all \'{e}tale over $\Lambda$, two having CM by $K$, while the other two corresponding to a Hida 
family $\cF= \sum_{n \geqslant 1} a_n(\cF) q^n  \in \Lambda \lsem q \rsem $ without  CM by $K$, and to its quadratic twist 
$\cF \otimes \varepsilon_K$.

Consider the unique cochains $\eta_\varphi:\G_K\to \bar\Q_p$ representing the cocycles $[\eta_\varphi]\in \rH^1(K,\varphi)$, 
normalized so that $\res_{\gp}([\eta_\varphi])=\log_p\in \rH^1(\Q_p,\bar\Q_p)$ 
and $\eta_\varphi(\gamma_0)=0$ for a fixed element $\gamma_0\in \G_K$ such that $\varphi(\gamma_0)\ne 1$ (see  \cite[\S1]{betina-dimitrov}). 
The slope $\cS_{\varphi}$ is a non-zero $p$-adic number defined by $\res_{\bar\gp}([\eta_{\varphi}])=\cS_{\varphi}\cdot [\log_p]$. 
As explained in {\it loc. cit. }  one has the following  formula
\begin{equation}\label{slope-def}
\cS_{\varphi}=-\frac{\log_p(u_{\varphi})}{\log_p(\tau(u_{\varphi}))},
\end{equation}
where $\bar\Q \cdot u_{\varphi}= (\bar\Q\otimes\cO_H^{\times} )[\varphi]$, and moreover $\cS_{\bar\varphi}= \cS_{\varphi}^{-1}$.

\begin{prop}[{\cite[\S2]{betina-dimitrov}}] \label{prop-tangent}
Any $\G_{K/\Q}$-stable ordinary infinitesimal deformation of $\rho=\left(\begin{smallmatrix} \psi & \psi\eta_\varphi \\ 0 & \bar\psi \end{smallmatrix}\right)$ is reducible,  {\it i.e.},  of the form 
$\left(\begin{smallmatrix} 1+\bar d  X& b X \\ 0 & 1+d X  \end{smallmatrix}\right) \cdot\rho  \mod{X^2}$
and one has a natural isomorphism: 
\[ t_\rho^{\ord} \xrightarrow{\sim} \rH^1(K,\bar\Q_p), \,\,\, \left[\left(\begin{smallmatrix} \bar d & b \\ 0 & d   \end{smallmatrix}\right)\right] \mapsto d.\]
\end{prop}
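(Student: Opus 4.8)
The plan is to compute $t_\rho^{\ord}$ as a Selmer-type subspace of $\rH^1(\G_K,\ad\rho)$ and to read off the diagonal coordinate. First I would record that an infinitesimal deformation of $\rho$ is a cocycle $N\in Z^1(\G_K,\ad\rho)$, written $N=\left(\begin{smallmatrix}\bar d & b\\ c & d\end{smallmatrix}\right)$, and that the adjoint action computed from $\rho=\left(\begin{smallmatrix}\psi & \psi\eta_\varphi\\ 0 & \bar\psi\end{smallmatrix}\right)$ endows $\ad\rho$ with a three-step $\G_K$-filtration $\mathfrak n\subset\mathfrak b\subset\ad\rho$ whose graded pieces are $\mathfrak n\simeq\varphi$ (the strictly upper entry $b$), $\mathfrak b/\mathfrak n\simeq\mathbf 1\oplus\mathbf 1$ (the diagonal $\bar d,d$) and $\ad\rho/\mathfrak b\simeq\varphi^{-1}$ (the strictly lower entry $c$); here I use the anticyclotomic identity $\bar\varphi=\varphi^{-1}$. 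The cocycle relation then forces $c\in Z^1(\G_K,\varphi^{-1})$, makes $\tr N=\bar d+d$ an honest homomorphism, and shows that modulo $c$ the diagonal entries $\bar d,d$ are homomorphisms $\G_K\to\bar\Q_p$ corrected only by terms in $\eta_\varphi\cdot c$.

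Next I would impose the two defining constraints. The action of $\G_{K/\Q}$ is implemented by the Weyl intertwiner $w=\left(\begin{smallmatrix}0&1\\1&0\end{smallmatrix}\right)$ coming from $\rho_f(\tau)$; it swaps the diagonal entries $\bar d\leftrightarrow d$ and the off-diagonal entries $b\leftrightarrow c$ while exchanging $\varphi\leftrightarrow\varphi^{-1}$, so $\G_{K/\Q}$-stability pins $\bar d$ to the $\tau$-conjugate of $d$ and pairs $c$ with $b$. Ordinarity at $\gp$ (where $\varphi$ is trivial and $\res_\gp[\eta_\varphi]=\log_p$) requires the deformation to preserve the ordinary flag with unramified quotient, hence $\res_\gp c=0$; applying $\tau$-stability yields $\res_{\bar\gp}c=0$ as well.

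The crux is reducibility, i.e. that $c$ is a coboundary. The class $[c]$ now lies in the Selmer group $\rH^1_{\{\gp,\bar\gp\}}(K,\varphi^{-1})$ of classes locally trivial at both primes above $p$, and I would prove this group vanishes. Since $\varphi^{-1}$ is a nontrivial finite-order anticyclotomic character trivial at $p$, a global Euler-characteristic count together with Poitou--Tate duality reduces the vanishing to the absence of a $\varphi^{-1}$-isotypic unit class compatible with the prescribed local triviality; this is precisely what the hypothesis \eqref{generic-CM}, through the slope $\cS_\varphi$ and the invariants $\cLm(\varphi),\cLm(\bar\varphi)$, guarantees. This is the step I expect to be the main obstacle, as it is the only genuinely arithmetic input and the only place where the hypotheses on the $\cL$-invariants are used. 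Once $\rH^1_{\{\gp,\bar\gp\}}(K,\varphi^{-1})=0$, we may subtract a lower-triangular coboundary to arrange $c=0$, so every such deformation is upper-triangular of the asserted shape $\left(\begin{smallmatrix}1+\bar dX & bX\\ 0 & 1+dX\end{smallmatrix}\right)\rho$.

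Finally I would analyse the map $[N]\mapsto d$. A direct computation shows that an upper-triangular coboundary $\partial v$ has vanishing diagonal, so $d\in\Hom(\G_K,\bar\Q_p)=\rH^1(K,\bar\Q_p)$ is well defined on $t_\rho^{\ord}$. For injectivity, $d=0$ forces $\bar d=0$ by $\G_{K/\Q}$-stability, whereupon the equation governing $b$ becomes the plain $\varphi$-twisted cocycle relation; ordinarity confines $[b]$ to the line $\bar\Q_p\cdot[\eta_\varphi]$, and this direction merely rescales the extension and is therefore a coboundary, so $[N]=0$. For surjectivity, given any $d\in\Hom(\G_K,\bar\Q_p)$ I would take $\bar d$ to be its $\tau$-conjugate and solve the inhomogeneous equation for $b$, the obstruction lying in the ordinary quotient of $\rH^1(K,\varphi)$ that the same Selmer analysis controls, producing an ordinary $\G_{K/\Q}$-stable deformation with prescribed $d$. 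Together with $\dim_{\bar\Q_p}\rH^1(K,\bar\Q_p)=2$ (Leopoldt for the imaginary quadratic field $K$, by Brumer), this identifies $t_\rho^{\ord}$ with $\rH^1(K,\bar\Q_p)$.
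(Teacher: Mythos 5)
The paper itself contains no proof of this statement: it is imported wholesale from \cite[\S2]{betina-dimitrov}, so your attempt can only be measured against what that argument must contain. Your skeleton (filtration of $\ad\rho$ with graded pieces $\varphi$, $\mathbf 1\oplus\mathbf 1$, $\bar\varphi=\varphi^{-1}$; reducibility via vanishing of a Selmer group for $\varphi^{-1}$; injectivity via $\rH^1(K,\varphi)=\bar\Q_p\cdot[\eta_\varphi]$ and diagonal rescaling) is the right one, but the two steps where the content actually lives both have gaps. For reducibility, you correctly reduce to the vanishing of $\ker\bigl(\rH^1(K,\varphi^{-1})\to \rH^1(K_\gp)\oplus \rH^1(K_{\bar\gp})\bigr)$, but you then defer this to hypothesis \eqref{generic-CM}, which is the wrong input: that hypothesis concerns $\cLm(\varphi),\cLm(\bar\varphi)$ and is used elsewhere (\'etaleness of the four components, $\dim S^{\dagger}_{\sw(f)}[\gm^2]=4$). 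What actually kills the group is that $\rH^1(K,\varphi^{-1})$ is \emph{one-dimensional}, spanned by $[\eta_{\bar\varphi}]$ --- the Baker--Brumer-type statement that the $\varphi^{-1}$-isotypic global units have nonvanishing $\log_p$ at $p$ --- together with $\res_\gp[\eta_{\bar\varphi}]\neq 0$, i.e.\ $\cS_{\bar\varphi}=\cS_\varphi^{-1}\neq 0$. Since you flag this as the step you have not carried out and aim it at the wrong hypothesis, the central arithmetic input is missing.

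The second gap is your treatment of ordinarity. You assert that the deformation must ``preserve the ordinary flag with unramified quotient'' and extract only $\res_\gp c=0$. Taken literally this is inconsistent with the statement you are proving: if the stable line stayed equal to $e_1$, unramifiedness of the quotient would force $d_{\mid \I_\gp}=0$ and (by $\tau$-stability) $d_{\mid \I_{\bar\gp}}=0$, hence $d=0$, destroying surjectivity onto all of $\rH^1(K,\bar\Q_p)$ --- note that $\cF$ itself has $d=-(\cL\eta_\gp+\bar\cL\eta_{\bar\gp})$, ramified at $\gp$. The correct analysis is that local reducibility at $\gp$ forces the \emph{function} $c_{\mid \G_{K_\gp}}$ to vanish (not merely its class, since $\varphi^{-1}$ is trivial there and the relevant coboundaries are zero), after which every line $e_1+uXe_2$ is $\G_{K_\gp}$-stable with quotient character $\bar\psi\bigl(1+(d-u\eta_\varphi)X\bigr)$; the unramified-quotient condition then \emph{determines} the tilt $u$ (namely $u=y$ when $d=x\eta_{\bar\gp}+y\eta_\gp$) rather than constraining $d$. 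Two smaller points: in your surjectivity step the obstruction to solving the inhomogeneous equation for $b$ lies in $\rH^2(K,\varphi)$, which vanishes because $h^1(K,\varphi)=1$ forces $h^2=0$ by the global Euler characteristic formula, not in a quotient of $\rH^1$; and in your injectivity step no ordinarity is needed to confine $[b]$ to $\bar\Q_p\cdot[\eta_\varphi]$, since $\rH^1(K,\varphi)$ is already that line.
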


Recall the basis $\{\eta_\gp, \eta_{\bar\gp}\}$ of $\rH^1(K,\bar\Q_p)$ where $\eta_\gp$, resp. $\eta_{\bar\gp}$, is unramified outside $\gp$, resp. 
$\bar\gp$, and $\res_{\gp}(\eta_\gp)=\res_{\bar\gp}(\eta_{\bar\gp})=\log_p$. One has $\eta_\gp+\eta_{\bar\gp}=\eta_{p\mid \G_K}$, where
$\eta_{p}\in \rH^1(\Q,\bar\Q_p)$ is such that $\res_{p}(\eta_{p})=\log_p$.
Writing $d=x\cdot \eta_{\bar\gp} + y\cdot \eta_\gp$ Figure~\ref{graph}  represents $t_\rho^{\ord}$ and its  relevant subspaces.

\begin{figure}[h!]
\centering
\begin{tikzpicture}
\draw[thick,] (0,0) -- (7,0) node[anchor=south east]  {{\small $x$-axis}};
\draw[thick,] (0,-2.25) -- (0,3) node[anchor=north west ] {{\small $y$-axis}};
\draw[thick,] (0,0) -- (7,3) node[anchor= north east] { \hspace{-5cm}  {\small   $t_\cF:x\cdot \cL=y\cdot \bar\cL$}};
\draw[thick,] (0,0) -- (-5,0) node[anchor=south west ] {{\small CM line}};
\draw[thick,] (-5,-2.10) -- (0,0)  node {$\bullet$}  node[anchor=north east] {$f$};
\draw[thick,] (-1,3) -- (4.25,-2.25) node[anchor=south west] {{\small   $x+y=-\log_p^{-1}(1+p^{\nu})$}};
\draw[thick]  (0,0) -- (2.25,-2.25) node[anchor=south west] {{\small   $x+y=0$}};
\draw[thick]  (0,0) -- (-3,3) node[anchor= north east] { {\small   weight $1$ line}};
\draw (1.4,0.6) node {$\bullet$} ;
\draw (1.5,0.7) node [anchor=south] {{\small $\cF$}};
\draw (2,0) node {$\bullet$} ;
\draw (2.1,0) node [anchor=south] {{\small $\Theta_\psi$}};
\draw (-0.6,0.6) node {$\bullet$} node [anchor=south] {{\small $f^{\dagger}_{\psi}$}};
\end{tikzpicture}
\caption{Ordinary tangent space of $\rho$}
\label{graph}
\end{figure}

We note that $t_\cF$ corresponds to the tangent space  of a  $\Lambda$-adic deformations of $\rho$ having no CM by $K$, 
whereas the line $x+y=-\log_p^{-1}(1+p^{\nu})$ corresponds to the  deformations whose determinant differs from 
$\det(\rho)$ by the infinitesimal  universal cyclotomic character $ \left(1-\frac{\eta_{p}}{\log_p(1+p^{\nu})}X \right)$
(see \cite[\S2.4]{BDPozzi}) 
We refer to  the closing  Remark~\ref{rem:f-dagger} for a definition of the genuine overconvergent generalized eigenform $f_\psi^{\dagger}$.

\begin{prop} \label{infinitesimal-rhoF}
There exists a basis where $\rho_\cF(\tau) \equiv  \left( \begin{smallmatrix} 0& 1 \\ 1 & 0  \end{smallmatrix}\right) \mod{X^2}$, and 
\[\rho_{\cF\mid \G_K}\equiv
 \left(\left(\begin{matrix} 1 & 0
 \\ 0  & 1 \end{matrix}\right) - \left(\begin{matrix} \bar\cL\eta_\gp+\cL\eta_{\bar\gp} &  \xi \cL \eta_{\varphi} 
 \\    \xi^{-1} \bar\cL \cS_{\bar\varphi} \bar\eta_{\varphi}  &  \cL\eta_\gp+\bar\cL\eta_{\bar\gp}  \end{matrix}\right)\cdot X \right)
 \left(\begin{matrix} \psi &  \\ & \bar \psi \end{matrix}\right)\mod{X^2}, \]
 where  $\xi\in\bar\Q_p$ is such that  $\xi^2 =\cS_{\bar\varphi} \frac{\bar\cL}{\cL}$. 
 Moreover, the $\G_{K_\gp}$-stable line of $\rho_{\cF}$ is generated residually by 
 $\xi\cdot e_1+ e_2$ and the corresponding  unramified quotient  is given by the character 
 \[\chi_{\cF}\equiv \psi\cdot\left(1- ( \cL(\eta_\gp-\eta_{\varphi})+ \bar\cL\eta_{\bar\gp})\cdot X\right) \mod{X^2}. \]
 
   The expression $\rho_{\cF \otimes \varepsilon_K}$ is obtained by replacing $\xi$  with $-\xi$. 
 \end{prop}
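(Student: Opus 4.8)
The plan is to determine $\rho_\cF\bmod X^2$ explicitly, using that $\cF$ is étale over $\Lambda\simeq\bar\Q_p\lsem X\rsem$, so that $\rho_\cF\colon\G_\Q\to\GL_2(\Lambda)$ is a genuine deformation of $\rho_f$ and $X$ may be taken as uniformizer. First I would fix the basis. In the CM basis $(e_1,e_2)$ of \eqref{rho} one already has $\rho_f(\tau)=w:=\left(\begin{smallmatrix}0&1\\1&0\end{smallmatrix}\right)$, because $e_2=\rho(\tau)e_1$ and $\tau^2=1$. Since $\rho_\cF(\tau)^2=I$ and $\rho_\cF(\tau)\equiv w\pmod X$, writing $\rho_\cF(\tau)=w+NX$ one gets $wN+Nw=0$, and conjugating by a matrix $I+PX$ solving $wP-Pw=N$ (always possible, as $\mathrm{ad}(w)$ surjects onto the matrices anticommuting with $w$) normalizes $\rho_\cF(\tau)\equiv w\pmod{X^2}$ without altering the residual representation.

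Restricting to $\G_K$ I then write $\rho_{\cF\mid\G_K}\equiv(I-MX)\,\mathrm{diag}(\psi,\bar\psi)\pmod{X^2}$. The cocycle identity satisfied by $M$ forces its diagonal entries into $\rH^1(K,\bar\Q_p)$, its upper-right entry into $\rH^1(K,\varphi)=\bar\Q_p\cdot\eta_\varphi$ and its lower-left entry into $\rH^1(K,\bar\varphi)=\bar\Q_p\cdot\bar\eta_\varphi$. The lower-right diagonal entry is read off from the tangent line $t_\cF$ of Figure~\ref{graph}, namely $d=\cL\eta_\gp+\bar\cL\eta_{\bar\gp}$. The normalization $\rho_\cF(\tau)=w$ makes $M$ equivariant in the precise form $M(\bar g)=w\,M(g)\,w$, which gives simultaneously the upper-left entry $\bar d=d\circ\tau=\bar\cL\eta_\gp+\cL\eta_{\bar\gp}$ and, writing $b=b_0\eta_\varphi$, the relation $c=b\circ\tau=b_0\bar\eta_\varphi$ via $\eta_\varphi\circ\tau=\bar\eta_\varphi$. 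Thus everything is reduced to the single scalar $b_0$.

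To pin down $b_0$, equivalently $\xi$, I would use $p$-ordinarity of the Hida family. Restricting the cocycles to $\G_{K_\gp}$, where $\psi=\bar\psi$ because $\varphi_{\mid\G_{\Q_p}}$ is trivial, one has $\res_\gp M=\log_p\cdot A$ with $A$ constant; the normalizations $\res_\gp\eta_\gp=\res_\gp\eta_\varphi=\log_p$, $\res_\gp\eta_{\bar\gp}=0$ and $\res_\gp\bar\eta_\varphi=\cS_\varphi\log_p$ (transporting $\res_{\bar\gp}\eta_\varphi=\cS_\varphi\log_p$ through $\tau$) give $A=\left(\begin{smallmatrix}\bar\cL&b_0\\ b_0\cS_\varphi&\cL\end{smallmatrix}\right)$. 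As $\psi_{\mid\G_{K_\gp}}$ is unramified, the quotient of $\rho_{\cF\mid\G_{K_\gp}}$ is unramified precisely when $0$ is an eigenvalue of $A$, i.e. $\det A=\cL\bar\cL-b_0^2\cS_\varphi=0$; hence $b_0^2=\cS_{\bar\varphi}\cL\bar\cL$ and $\xi:=b_0/\cL$ satisfies $\xi^2=\cS_{\bar\varphi}\bar\cL/\cL$, which is the quantity fixed before Theorem~\ref{main-q-exp}. With $\det A=0$ and $\tr A=\cL+\bar\cL$, the ordinary (sub) line is the $(\cL+\bar\cL)$-eigenvector $\xi e_1+e_2$, yielding the displayed shape of $\rho_{\cF\mid\G_K}$. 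Projecting onto the complementary quotient ($e_1\equiv-\xi^{-1}e_2$) then gives $\chi_\cF\equiv\psi\bigl(1-(\cL(\eta_\gp-\eta_\varphi)+\bar\cL\eta_{\bar\gp})X\bigr)$, whose restriction to $\G_{K_\gp}$ is unramified since the residue at $\gp$ of $\cL(\eta_\gp-\eta_\varphi)+\bar\cL\eta_{\bar\gp}$ vanishes.

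Finally, since $\varepsilon_K$ is trivial on $\G_K$ while $\varepsilon_K(\tau)=-1$, one has $\rho_{\cF\otimes\varepsilon_K\mid\G_K}=\rho_{\cF\mid\G_K}$ but $\rho_{\cF\otimes\varepsilon_K}(\tau)=-w$; conjugating by $\mathrm{diag}(1,-1)$ restores $\rho(\tau)=w$ and changes the sign of both off-diagonal entries, which is exactly $\xi\mapsto-\xi$ (the diagonal and $\chi_\cF$ being untouched). The main obstacle is the determination of the scalar $\xi$, i.e. the identity $\xi^2=\cS_{\bar\varphi}\bar\cL/\cL$: the clean route is the ordinarity identity $\det A=0$, but it depends on correctly transporting the slope $\cS_\varphi$ from $\res_{\bar\gp}\eta_\varphi$ to $\res_\gp\bar\eta_\varphi$ through $\tau$, and on noting that ordinarity only fixes $b_0$ up to sign — the two signs being precisely the two families $\cF$ and $\cF\otimes\varepsilon_K$.
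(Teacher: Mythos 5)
Your proof is correct and works in the same deformation-theoretic framework as the paper's (the CM basis of \eqref{rho}, Proposition~\ref{prop-tangent} and Figure~\ref{graph}), but two of your sub-arguments take a genuinely different route. To normalize $\rho_\cF(\tau)$, the paper imports from \cite{betina-dimitrov} that $\rho_\cF(\tau)\equiv\left(\begin{smallmatrix}0&\mu^{-1}\\ \mu&0\end{smallmatrix}\right)$, then compares the two expressions for $\rho_{\cF\mid\G_K}(\tau\cdot\tau)$ and uses $\varphi\ne 1$ to force $\mu^2=1$; you instead use $\tau^2=1$ together with the surjectivity of $\mathrm{ad}(w)$ onto the anticommutant of $w$, which is equally valid and more self-contained. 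More substantially, the paper obtains the off-diagonal entries (hence $\xi$) from the reducibility-ideal computation of \cite[Lemma~3.10]{betina-dimitrov}, whereas you rederive $b_0^2=\cS_{\bar\varphi}\cL\bar\cL$ from $p$-ordinarity of the Hida family via $\det A=0$; this determines $\xi$ and the ordinary line $\xi e_1+e_2$ in one stroke and even recovers irreducibility mod $X^2$ (since $\cL\bar\cL\ne 0$ under \eqref{generic-CM}), at the mild cost of justifying that the mod $X^2$ reduction of the ordinary filtration forces $\mathrm{im}(A)\subseteq L$, which you do correctly. Two points deserve to be made explicit. First, the line $t_\cF$ alone only determines $d$ up to scalar; the exact value $\cL\eta_\gp+\bar\cL\eta_{\bar\gp}$ also uses the determinant condition $x+y=-\log_p^{-1}(1+p^{\nu})$ of Figure~\ref{graph} (equivalently the choice of $X$ as uniformizer), using $(\cL+\bar\cL)\log_p(1+p^{\nu})=1$. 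Second, the upper-right entry of $M$ is a priori only $b_0\eta_\varphi$ plus a coboundary $c(1-\varphi)$; as the paper itself notes, such coboundaries vanish on $\G_{K_\gp}$, so your computation of $A$ and of the ordinary filtration is unaffected, and the $\tau$-equivariance $M(\tau g\tau)=wM(g)w$ together with the residual conjugation freedom by $I+bwX$ lets you remove the coboundaries from both off-diagonal entries simultaneously, so the displayed formula holds on the nose.
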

 
 \begin{proof} The claim about $\rho_{\cF\mid \G_K}$ follows from Proposition~\ref{prop-tangent}, Figure~\ref{graph}, the equality $(\xi \cL)^2=\cS_{\bar\varphi} \bar\cL \cL$ and  the fact,  proved in \cite[Lemma~3.10]{betina-dimitrov}, that its reducibility ideal equals $(X^2)$. Note that while the cocycles $\cS_{\bar\varphi} \bar\eta_{\varphi}$  and   $\eta_{\bar\varphi}$ from {\it ibid.} might differ by a coboundary, this coboundary necessarily vanishes on $\G_{K_\gp}$,  facilitating  the computation of  the ordinary filtration.

  It follows from {\it loc. cit.} that 
 $\rho_\cF(\tau)\equiv \left(\begin{smallmatrix} 0  & \mu^{-1} \\ \mu & 0 \end{smallmatrix}\right)\mod{X^2}$, for some 
 $\mu \in \bar\Q_p^\times$. By rescaling the basis by an element of $1+\bar\Q_pX$ 
  (which would not alter the expression of $\rho_{\cF\mid \G_K}$ modulo $X^2$), one can find  $\mu' \in \bar\Q_p$ such that 
  \[\rho_\cF(\tau)\equiv   \left(\begin{smallmatrix} -\mu' X  & \mu^{-1} \\ \mu & \mu' X \end{smallmatrix}\right)
   \mod{X^2}.\]
 
   Computing $\rho_{\cF\mid \G_K}(\tau\cdot\tau)$ one finds that: 
 \begin{align*} \left(\begin{smallmatrix}  \cL\eta_\gp+\bar\cL\eta_{\bar\gp} &\xi \cL\bar\eta_{\varphi}
 \\   \xi^{-1} \bar\cL \cS_{\bar\varphi} \eta_{\varphi}  & \bar\cL\eta_\gp+\cL\eta_{\bar\gp}   \end{smallmatrix}\right)
 \left(\begin{smallmatrix} 1 &  \\ & \varphi \end{smallmatrix}\right)
 = &\\
 \left( \begin{smallmatrix} 0& \mu^{-1} \\ \mu & 0  \end{smallmatrix}\right)
  \left(\begin{smallmatrix} \bar\cL\eta_\gp+\cL\eta_{\bar\gp} & \xi \cL\eta_{\varphi} 
 \\   \xi^{-1} \bar\cL \cS_{\bar\varphi} \bar\eta_{\varphi}  &  \cL\eta_\gp+\bar\cL\eta_{\bar\gp}  \end{smallmatrix}\right)&
 \left(\begin{smallmatrix} \varphi &  \\ & 1 \end{smallmatrix}\right)
 \left( \begin{smallmatrix} 0& \mu^{-1} \\ \mu & 0  \end{smallmatrix}\right)+
  \left( \begin{smallmatrix} 0& \mu^{-1}\mu'(\varphi-1) \\ \mu\mu'(\varphi-1) & 0  \end{smallmatrix}\right). 
 \end{align*}
  
 Since $\varphi$ is non-trivial  the above equality implies   that $\mu'=0$ and $\mu^2=1$. 
 If $\mu=-1$,   then one can change the signs of $\mu$ and $\xi$ simultaneously, to obtain 
  $\rho_\cF(\tau) \equiv  \left( \begin{smallmatrix} 0& 1 \\ 1 & 0  \end{smallmatrix}\right) \mod{X^2}$ in all cases. 
\end{proof} 

\subsection{Some $\ell$-units} \label{l-units}

Recall the $\cL$-invariant $\cL_{\gp}=-\frac{\log_p (u_\gp)}{\ord_{\gp}(u_\gp)}$ defined in \cite{betina-dimitrov}, where
 $u_\gp \in \cO_{K}[\frac{1}{\gp}]^\times $ is any element having non-zero  $\gp$-adic valuation. 
Analogously given a  prime $\gl$ of $K$, relatively prime to $p$,  we let 
\begin{equation}\label{defn-ellunit}
\cL_{\gl}=-\frac{\log_p (u_{\gl})}{\ord_{\gl}(u_{\gl})},
\end{equation}
where  $u_\gl \in \cO_{K}[\tfrac{1}{\gl}]^\times $ is any element whose $\gl$-adic valuation $\ord_{\gl}(u_{\gl})$ is non-zero. 
Clearly, $\cL_{\gl}$ only depends on $\gl$ (and not on the  choice of  $u_\gl$), and it 
equals $-\log_p(\ell)$ (resp. $-\tfrac{1}{2}\log_p(\ell)$) when  
 $\ell$ is inert (resp. ramified) in $K$. 
 
 We let $H$ denote the splitting field of the anti-cyclotomic character $\varphi$.

 Assume for the rest of \S\ref{l-units} that   $\ell$ is inert or ramified in $K$,  and that 
 $\ell D\nmid N$. Then $\psi$ is unramified at the unique prime $\gl$  of $K$ above $\ell$ and 
\begin{equation}\label{trivial-at-ell}
\varphi(\gl)=\psi(\gl)/\psi(\bar\gl)=1,
\end{equation}
{\it i.e.} $\gl$ splits completely in $H$.  
Let  $\lambda$ be a prime of $H$ above $\gl$ and choose an element  $u_\lambda\in  \cO_H[\tfrac{1}{\lambda}]^{\times}$ whose $\lambda$-adic valuation  $\ord_\lambda(u_\lambda)$ is non-zero. 
Using \eqref{trivial-at-ell} one can prove that 
 $(\bar\Q \otimes \cO_H[\tfrac{1}{\ell}]^\times)[\varphi]$ is a $\bar\Q$-plane with a basis consisting of $u_{\varphi}$ and 
\begin{equation}\label{unit lambda} 
u_{\varphi,\lambda} = \sum_{h \in \Gal(H/K)} \varphi^{-1}(h) \otimes  h(u_\lambda) \in (\bar\Q \otimes   \cO_H[\tfrac{1}{\ell}]^\times)[\varphi].
\end{equation}

 By \eqref{slope-def} the following  does not depend on the particular choice of  $u_\lambda$\begin{equation}\label{inert-L-inv}
\cL_{\varphi,\lambda}=-\frac{\log_p(u_{\varphi,\lambda}) + \cS_{\varphi}   \log_p(\tau(u_{\varphi,\lambda}))}{\ord_\lambda(u_\lambda)}. 
\end{equation}

Let us now investigate  how $\cL_{\varphi,\lambda}$  depends  on $\lambda$.   Since 
$\Gal(H/K)$ acts transitively on the primes $\lambda$ of $H$ above $\ell$, any such prime is of the form  $h(\lambda)$ for some $h\in \G_K$. As $h(u_{\lambda})=u_{h(\lambda)}$, one has  $u_{\varphi,h(\lambda)}=\varphi(h)\cdot u_{\varphi,\lambda}$ and 
\[\cL_{\varphi,h(\lambda)}=\varphi(h)\cdot \cL_{\varphi,\lambda}. \]
 On the other hand, given   any element   $\gamma\in \G_\Q\backslash \G_K$, one has
\[
 \psi(\tau h\gamma h^{-1}) =
\psi(\tau h\tau\cdot \tau\gamma\cdot h^{-1})=\varphi(h)^{-1}\psi(\tau \gamma),
\]
hence  $\psi(\tau\cdot  h \gamma h^{-1}) \cdot  \cL_{\varphi,h(\lambda)}=
 \psi(\tau \gamma) \cdot  \cL_{\varphi,\lambda}$.  
 
 We will now make the following specific choice for $\gamma$: 
 
$\bullet$  if $\ell$ is inert in $K$, we let  $\gamma$  be  any Frobenius element $\Frob_\ell\in \G_\Q$ whose image in $\Gal(H/\Q)$ equals the Frobenius at $\lambda$; 
 
$\bullet$  if $\ell$ is ramified in $K$, we let  $\gamma$  be any  element of an inertia subgroup of $\G_\Q$ at $\ell$  whose image in $\Gal(H/\Q)$ generates the order two subgroup $\Gal(H_\lambda/\Q_\ell)=
\I(H_\lambda/\Q_\ell)$. 

\begin{defn}\label{ell-L-inv} Let $\ell$ be a prime which is inert or ramified in $K$ and such that $\ell D\nmid N$. For $\gamma$ chosen as above we define  
\begin{equation}\label{psi-invariant}
\cL_{\psi,\ell} =   \psi(\tau \gamma) \cdot  \cL_{\varphi,\lambda} \in \bar\Q_p.
\end{equation}
\end{defn}
As the restriction of $\gamma$ to $\bar\Q^{\ker(\rho_f)}$
is uniquely determined, up to $\G_H$-conjugacy,   it follows that $\cL_{\psi,\ell}$ only depends on $\ell$ and on $\psi$, and not on the particular  choices of $\lambda$ and of $\gamma$  as  above.

\subsection{Computation at split primes}
Assume that the prime $\ell$ splits in  $K$, and  write $(\ell)=\gl\bar{\gl}$ with $\gl\ne\bar{\gl}$. 
We recall that  $a_\ell=\psi(\gl)+\psi(\bar\gl)$, for $\ell\ne p$ and $a_p=\psi(\gp)=\psi(\bar\gp)$. 
We will now determine $a_\ell(\cF)$ infinitesimally.

\begin{prop} \label{prop-split} Let  $\ell\ne p$  be a prime  splitting in $K$ as  $\gl \cdot \bar{\gl}$.
  
One has  $\eta_{\gp}(\Frob_\gl)=\cL_{\gl}$, $\eta_{\bar\gp}(\Frob_\gl)=\cL_{\bar\gl}$ and 
\[
\left.\tfrac{d}{dX}\right\mid _{X=0}a_\ell(\cF)=
- \psi(\gl)\cdot (\bar\cL  \cL_{\gl}+ \cL \cL_{\bar\gl})- \psi(\bar\gl)\cdot (\bar\cL  \cL_{\bar\gl}+ \cL\cL_{\gl}).
\]

 Moreover $\left.\tfrac{d}{dX}\right\mid _{X=0}a_p(\cF)= \frac{\psi(\gp)}{\log_p(1+p^{\nu})} \cdot \left(\frac{ \cLm(\varphi) \cdot \cLm(\bar\varphi)}{\cLm(\varphi)+\cLm(\bar\varphi)}+\cL_{\gp}\right)$. 
\end{prop}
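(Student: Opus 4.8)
The plan is to read both coefficients off the explicit shape of $\rho_\cF \bmod X^2$ in Proposition~\ref{infinitesimal-rhoF}, after evaluating the occurring cochains on Frobenius elements by global reciprocity. Since $\ell\nmid Np$ splits as $\gl\bar\gl$, the coefficient $a_\ell(\cF)$ equals the trace $\tr\rho_\cF(\Frob_\gl)$, which is a class function, so the choice of $\gl$ versus $\bar\gl$ is immaterial. As $\Frob_\gl\in\G_K$, Proposition~\ref{infinitesimal-rhoF} shows that the two diagonal entries of $\rho_\cF(\Frob_\gl)$ are $\bigl(1-(\bar\cL\,\eta_\gp+\cL\,\eta_{\bar\gp})(\Frob_\gl)\,X\bigr)\psi(\gl)$ and $\bigl(1-(\cL\,\eta_\gp+\bar\cL\,\eta_{\bar\gp})(\Frob_\gl)\,X\bigr)\psi(\bar\gl)$, the off-diagonal terms being irrelevant to the trace. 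Hence the whole split statement reduces to computing $\eta_\gp(\Frob_\gl)$ and $\eta_{\bar\gp}(\Frob_\gl)$.

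The key step is this evaluation. I would use that $\eta_\gp\in\Hom(\G_K,\bar\Q_p)$ is unramified outside $\gp$ with inertial restriction $\log_p$ at $\gp$, is trivial on principal ideles by global class field theory, and apply the product formula $\sum_v\eta_{\gp,v}(u_\gl)=0$ to the $\gl$-unit $u_\gl$ of \eqref{defn-ellunit}. Every local term vanishes except the one at $\gp$, equal to $\log_p(u_\gl)$, and the one at $\gl$, equal to $\ord_\gl(u_\gl)\cdot\eta_\gp(\Frob_\gl)$; this gives $\eta_\gp(\Frob_\gl)=-\log_p(u_\gl)/\ord_\gl(u_\gl)=\cL_\gl$. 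The same computation for $\eta_{\bar\gp}$, whose ramified place is $\bar\gp$ and whose completion there is taken through $\iota_p\circ\tau$, produces the term $\log_p(\tau(u_\gl))$ and hence $\eta_{\bar\gp}(\Frob_\gl)=-\log_p(\tau(u_\gl))/\ord_\gl(u_\gl)=\cL_{\bar\gl}$. Differentiating the sum of the two diagonal entries at $X=0$ then yields exactly $-\psi(\gl)(\bar\cL\,\cL_\gl+\cL\,\cL_{\bar\gl})-\psi(\bar\gl)(\bar\cL\,\cL_{\bar\gl}+\cL\,\cL_\gl)$.

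For $a_p(\cF)$ I would instead use that the ordinary $U_p$-eigenvalue is the value on $\Frob_\gp$ of the unramified quotient character $\chi_\cF$ of Proposition~\ref{infinitesimal-rhoF}; this is well defined because the cochain $\cL(\eta_\gp-\eta_\varphi)+\bar\cL\,\eta_{\bar\gp}$ has trivial inertial restriction at $\gp$ (namely $\cL(\log_p-\log_p)+\bar\cL\cdot 0$). Running the reciprocity argument with the $\gp$-unit $u_\gp$ gives $\eta_\gp(\Frob_\gp)=\cL_\gp$; the same argument for $\eta_{\bar\gp}$ produces $-\log_p(\tau(u_\gp))/\ord_\gp(u_\gp)$, which by $N_{K/\Q}(u_\gp)=\pm p^{m}$ (whence $\log_p(\tau(u_\gp))=-\log_p(u_\gp)$) equals $-\cL_\gp$. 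It then remains to evaluate $\eta_\varphi(\Frob_\gp)$, a combination of $\cLm(\bar\varphi)$ and $\cL_\gp$ dictated by the anti-cyclotomic $\cL$-invariants $\cLm(\varphi),\cLm(\bar\varphi)$ and the slope $\cS_\varphi$ of \eqref{slope-def} from \cite[\S1]{betina-dimitrov}, and to substitute everything into $-\psi(\gp)\bigl(\cL(\eta_\gp(\Frob_\gp)-\eta_\varphi(\Frob_\gp))+\bar\cL\,\eta_{\bar\gp}(\Frob_\gp)\bigr)$. Simplifying with the identities $\cL+\bar\cL=\log_p(1+p^{\nu})^{-1}$ and $\bar\cL/\cL=\cLm(\bar\varphi)/\cLm(\varphi)$, which follow directly from the definitions of $\cL$ and $\bar\cL$, collapses the expression to $\tfrac{\psi(\gp)}{\log_p(1+p^{\nu})}\bigl(\tfrac{\cLm(\varphi)\cLm(\bar\varphi)}{\cLm(\varphi)+\cLm(\bar\varphi)}+\cL_\gp\bigr)$.

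The hard part will be this last evaluation, that of $\eta_\varphi$ on $\Frob_\gp$: unlike $\eta_\gp$ and $\eta_{\bar\gp}$, the cochain $\eta_\varphi$ is $\varphi$-twisted and ramified at $\gp$, so its Frobenius value is not accessible by the naive product formula and must be pinned down through the $\varphi$-isotypic $\gp$-units $u_\varphi$ underlying $\cS_\varphi$ and the very definition of the anti-cyclotomic $\cL$-invariant in \cite{betina-dimitrov}. By contrast, once the reciprocity computation of $\eta_\gp(\Frob_\gl)$ is secured, the split-prime part and the bookkeeping with $\cL$ and $\bar\cL$ are entirely routine.
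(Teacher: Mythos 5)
Your proposal is correct and follows essentially the same route as the paper: both read $\left.\tfrac{d}{dX}\right|_{X=0}a_\ell(\cF)$ and $\left.\tfrac{d}{dX}\right|_{X=0}a_p(\cF)$ off the infinitesimal matrix of Proposition~\ref{infinitesimal-rhoF} via $a_\ell(\cF)=\tr\rho_\cF^{\I_\ell}(\Frob_\ell)$ and $a_p(\cF)=\chi_\cF(\Frob_p)$, and both evaluate $\eta_\gp,\eta_{\bar\gp}$ on Frobenius elements by exactly the global-reciprocity/product-formula argument applied to the $\ell$-units $u_\gl$ (the paper phrases it as a class field theory exact sequence, which is the same computation). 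The one input you leave implicit, the value $(\eta_\varphi-\eta_\gp)(\Frob_\gp)=\cLm(\bar\varphi)+\cL_\gp$, is precisely what the paper also imports from \cite{betina-dimitrov} rather than reproving, so you have correctly located the only non-routine ingredient.
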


\begin{proof} By Class Field Theory one has an exact sequence
\[0\to \Hom(\G_K,\bar\Q_p)\to \Hom(\cO_{K,\gp}^\times \times \cO_{K,\bar\gp}^\times \times K_{\gl}^\times ,\bar\Q_p)
\to \Hom(\cO_{K}[\tfrac{1}{\gl}]^\times,\bar\Q_p)\]
whose first map sends $\eta_\gp$ to $(\log_p,0,\eta_{\bar{\gp}}(\Frob_\gl)\cdot \ord_{\gl})$. Hence 
$\log_p (u_{\gl})+ \eta_{\gp}(\Frob_\gl)\ord_{\gl}(u_{\gl})=0$ as claimed (see \eqref{defn-ellunit}). 
Similarly  $\eta_{\bar{\gp}}(\Frob_\gl)=\cL_{\bar{\gl}}$. 

  One can associate  to $\cF$  a $p$-ordinary  $\Lambda$-adic representation 
 $\rho_{\cF}:\G_\Q \to \GL_2(\Lambda)$ whose trace is given by the pushforward of $\tau_{\cC}$ introduced in  \eqref{pseudo-char}.  
 It follows that   for any $\ell \ne p$ 
\[ a_\ell(\cF)=\tr \rho_{\cF}^{\I_\ell}(\Frob_\ell), \text{ and } a_p(\cF)=\chi_{\cF}(\Frob_p),\]
where $\chi_{\cF}$ is the unramified character acting on the unramified $\G_{K_{\gp}}$-quotient of $\rho_{\cF}$.
The computation of $\left.\tfrac{d}{dX}\right\mid _{X=0}a_\ell(\cF) $ then follows directly from the infinitesimal expression for
$\rho_{\cF\mid \G_K}$ given in Proposition \ref{infinitesimal-rhoF}. The value  
 $\left.\tfrac{d}{dX}\right\mid _{X=0}a_p(\cF)$ is  computed similarly  using the formulas $(\eta_{\varphi}-\eta_\gp)(\Frob_\gp)=\cLm(\bar\varphi)+\cL_{\gp}$ and  $\eta_{\bar\gp}(\Frob_\gp)=-\cL_{\gp}$ established in \cite{betina-dimitrov}. 
 \end{proof}

\subsection{Computation at  inert primes}
We now turn to the case of a prime  $\ell$ which is  inert in $K$. If $\ell\nmid N$, then we let  $\gamma=\Frob_\ell\in\G_\Q\backslash \G_K$ 
denote    a Frobenius element and let  $\lambda$ denote the resulting place of $H$ above $\ell$, 
{\it i.e.} $\Frob_\ell^2\in \G_H$ defines a Frobenius element at $\lambda$, denoted $\Frob_{\lambda}$. 
 We have seen that $a_\ell=0$ and the aim of this section is to express infinitesimally $a_\ell(\cF)$ in terms of logarithms of  $\ell$-units.

\begin{prop}\label{prop-inert}
Assume that $\ell$ is inert in $K$. If $\ell \mid N$, then $a_\ell(\cF)=0$.  

If $\ell \nmid N$  then 
\begin{enumerate}
\item  $\eta_{\varphi}(\Frob_{\lambda})=\cL_{\varphi,\lambda}$, and 
\item  $\left.\tfrac{d}{dX}\right\mid _{X=0}a_\ell(\cF)=- \xi \cL\cdot  \cL_{\psi,\ell}$. 
\end{enumerate}
\end{prop}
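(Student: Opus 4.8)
The plan is to reduce everything to the infinitesimal shape of $\rho_\cF$ provided by Proposition~\ref{infinitesimal-rhoF}, exactly as in the split case (Proposition~\ref{prop-split}); the only genuinely new feature is that an inert $\Frob_\ell$ lies in $\G_\Q\setminus\G_K$, so one must leave the convenient $\G_K$-block form. Recall from the proof of Proposition~\ref{prop-split} that $a_\ell(\cF)=\tr\rho_\cF^{\I_\ell}(\Frob_\ell)$ for every $\ell\ne p$. When $\ell\mid N$ is inert, $\psi$ is ramified at the unique prime $\gl\mid\ell$, so $\rho_{f\mid\G_{\Q_\ell}}$ is an irreducible (dihedral supercuspidal) induction and $\rho_f^{\I_\ell}=0$; since by Proposition~\ref{bad-Hecke-ops} the restriction to $\I_\ell$ is constant along the component, one gets $\rho_\cF^{\I_\ell}=0$ and hence $a_\ell(\cF)=0$. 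This settles the first assertion.

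For part~(i) I would argue by global class field theory in the $\varphi$-isotypic component, mirroring the exact sequence used in Proposition~\ref{prop-split}. Because $\varphi_{\mid\G_{\Q_p}}$ is trivial and $\gl$ splits completely in $H$ (see~\eqref{trivial-at-ell}), the cocycle $\eta_\varphi$ is locally a homomorphism at each of $\gp$, $\bar\gp$ and $\lambda$, and it is unramified at the primes above $\ell$ since $\ell\nmid N$. Pairing $\eta_\varphi$ against the $\ell$-unit $u_{\varphi,\lambda}$ of~\eqref{unit lambda} and invoking the vanishing of the sum of local terms, the contributions at $\gp$ and $\bar\gp$ are $\log_p(u_{\varphi,\lambda})$ and $\cS_\varphi\log_p(\tau(u_{\varphi,\lambda}))$ by the normalizations $\res_\gp([\eta_\varphi])=\log_p$ and $\res_{\bar\gp}([\eta_\varphi])=\cS_\varphi\log_p$, whereas the contribution at $\lambda$ is $\eta_\varphi(\Frob_\lambda)\cdot\ord_\lambda(u_\lambda)$. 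Solving the resulting relation and comparing with~\eqref{inert-L-inv} yields $\eta_\varphi(\Frob_\lambda)=\cL_{\varphi,\lambda}$.

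The heart of the matter is part~(ii), the computation of $a_\ell(\cF)=\tr\rho_\cF(\Frob_\ell)$ modulo $X^2$ (here $\rho_\cF$ is unramified at $\ell$). Writing $\Frob_\ell=\tau\,g$ with $g=\tau^{-1}\Frob_\ell\in\G_K$ and using $\rho_\cF(\tau)\equiv\left(\begin{smallmatrix}0&1\\1&0\end{smallmatrix}\right)$ together with the $\G_K$-formula of Proposition~\ref{infinitesimal-rhoF}, a direct multiplication shows that the diagonal of $\rho_\cF(\Frob_\ell)$ is $O(X)$ and that
\[
\left.\tfrac{d}{dX}\right|_{X=0}a_\ell(\cF)=-\bigl(\xi^{-1}\bar\cL\cS_{\bar\varphi}\,\bar\eta_\varphi(g)\,\psi(g)+\xi\cL\,\eta_\varphi(g)\,\bar\psi(g)\bigr).
\]
To collapse these two summands I would use $\Frob_\lambda=\gamma^2$, which in terms of $g$ reads $\Frob_\lambda=\bar g\,g$ with $\bar g=\tau g\tau^{-1}$, yielding the cocycle relation $\eta_\varphi(\Frob_\lambda)=\bar\eta_\varphi(g)+\varphi(g)^{-1}\eta_\varphi(g)$ (here $\bar\eta_\varphi(g)=\eta_\varphi(\bar g)$ and $\varphi(\bar g)=\varphi(g)^{-1}$). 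Dividing the displayed expression by $\psi(g)$, substituting $\bar\psi(g)/\psi(g)=\varphi(g)^{-1}$, and invoking the defining identity $\xi^2=\cS_{\bar\varphi}\,\bar\cL/\cL$ of Proposition~\ref{infinitesimal-rhoF}, the two terms recombine into $\xi\cL\,\psi(g)\,\eta_\varphi(\Frob_\lambda)$. Finally part~(i) and $\cL_{\psi,\ell}=\psi(\tau\gamma)\,\cL_{\varphi,\lambda}=\psi(g)\,\cL_{\varphi,\lambda}$ (Definition~\ref{ell-L-inv}) give $\left.\tfrac{d}{dX}\right|_{X=0}a_\ell(\cF)=-\xi\cL\,\cL_{\psi,\ell}$.

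I expect the main obstacle to be bookkeeping rather than conceptual: one must keep the $\tau$-conjugate cocycle $\bar\eta_\varphi$ (valued in $\bar\varphi=\varphi^{-1}$, and only equal to $\cS_{\bar\varphi}\eta_{\bar\varphi}$ up to a coboundary that vanishes on $\G_{K_\gp}$) consistent between the matrix entry and the cocycle relation, and pin down the reciprocity normalizations in part~(i). It is reassuring that the quadratic identity $\xi^2=\cS_{\bar\varphi}\bar\cL/\cL$ is precisely what makes the two off-diagonal contributions merge, so that the final formula is essentially forced by the shape of $\rho_\cF$.
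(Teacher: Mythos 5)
Your proposal is correct and follows essentially the same route as the paper: the $\ell\mid N$ case via constancy of the local type along the component, part (i) via the class field theory exact sequence over $H$ paired against $u_{\varphi,\lambda}$, and part (ii) by reducing to the infinitesimal form of $\rho_\cF$ in Proposition~\ref{infinitesimal-rhoF} together with $\Frob_\ell^2=\Frob_\lambda$ and the identity $\xi^2=\cS_{\bar\varphi}\bar\cL/\cL$. The only (cosmetic) difference is in (ii): the paper reads $\tr\rho_\cF(\Frob_\ell)$ off the off-diagonal entries of $\rho_\cF(\Frob_\ell)^2=\rho_\cF(\Frob_\lambda)$, whereas you compute $\tr\bigl(\rho_\cF(\tau)\rho_\cF(\tau^{-1}\Frob_\ell)\bigr)$ directly and recombine the two off-diagonal contributions via the cocycle relation for $\eta_\varphi$ — the same computation packaged differently.
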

\begin{proof} If $\ell$ divides  $N$ then it also divides the conductor of $\psi$, hence $\rho_f^{\I_\ell}=\{0\}$ and $a_\ell=0$. 
Then $a_\ell(\cF)=0$ as well, since by \cite[\S6]{dim-durham} all classical specializations of a Hida family $\cF$ share the same local type at $\ell$. 

 (i) Letting $\pi\mid\gp$ be the prime of $H$ given by $\iota_p$, Class Field Theory yields  an exact sequence
\[ 0\to \Hom(\G_H,\bar\Q_p)\to \Hom\left(\prod_{h \in \Gal(H/K)}
 (\cO_{H,h(\pi)}^\times \times \cO_{H,h(\bar\pi)}^\times) \times  H_{\lambda}^\times ,\bar\Q_p\right)
\to \Hom(\cO_H[\tfrac{1}{\lambda}]^\times,\bar\Q_p),\]
sending $\eta_{\varphi\mid \G_H}$ to 
 $\left( (\varphi^{-1}(h)(1,\cS_{\varphi})\log_p)_{h \in \Gal(H/K)},\eta_{\varphi}(\Frob_{\lambda})\cdot \ord_{\lambda}\right)$. 
The triviality on $u_{\lambda}\in \cO_H[\tfrac{1}{\lambda}]^\times$ yields by \eqref{unit lambda} and \eqref{inert-L-inv} the desired equality
\[\log_p(u_{\varphi,\lambda}) + \cS_{\varphi}   \log_p(\tau(u_{\varphi,\lambda}))+ \eta_{\varphi}(\Frob_{\lambda})  \ord_\lambda(u_\lambda)=0.\]

(ii) As $\rho_f(g)= \left(\begin{smallmatrix} 0 & \psi(g\tau) \\  \psi(\tau g)  & 0 \end{smallmatrix} \right)$, for all $g\in \G_\Q \backslash \G_K$, one has 
  \[ \rho_{\cF}(g^2)=\rho_{\cF}(g) \rho_{\cF}(g)\equiv\left(\begin{matrix} \ast & \psi(g \tau) \cdot \tr (\rho_{\cF}(g)) \\
\psi(\tau g)   \tr (\rho_{\cF}(g)) &  \ast  \end{matrix} \right) \mod{X^2}. \]
Comparing the expression of the upper right coefficient of $\rho_{\cF}(g^2)$ with Proposition~\ref{infinitesimal-rhoF}, one finds 
  \begin{equation}\label{lasteq}
  \tr \rho_{\cF}(g) =-\xi\cL \psi(\tau g ) \eta_{\varphi}(g^2) X \pmod X^2, \text{ for all } g\in \G_\Q \backslash \G_K. 
  \end{equation}

Applying this to $g=\Frob_\ell$, so that  $g^2=\Frob_{\lambda}$,    one deduces that 
\[
\left.\tfrac{d}{dX}\right\mid _{X=0}a_\ell(\cF)=-\xi\cL \psi(\tau \Frob_\ell ) \eta_{\varphi}(\Frob_{\lambda}).
\]
The claim then follows  from (i), in view of \eqref{psi-invariant}. 
\end{proof}

\subsection{Computation at  ramified primes}
Finally, we turn to the case when   $(\ell)=\gl^2$ ramifies  in $K$. 
We have seen that $a_\ell=\psi(\gl)$ and the aim of this section is to express infinitesimally $a_\ell(\cF)$ in terms of logarithms of  $\ell$-units.  
If $\ell D \mid N$, applying the same argument as in  Proposition \ref {prop-inert} yields $a_\ell(\cF)=0$.
Henceforth we assume that $\ell D \nmid N$, and recall that 
$\gamma\in\G_\Q\backslash \G_K$ is an  element of an inertia subgroup of $\G_\Q$ at $\ell$  whose image in $\Gal(H/\Q)$ generates the order two subgroup $\Gal(H_\lambda/\Q_\ell)=
\I(H_\lambda/\Q_\ell)$ (see \S\ref{l-units}).

\begin{prop}\label{prop-ram} Assume that $\ell\mid D$, but  $\ell D \nmid N$. Then
\begin{enumerate}
\item  $\eta_{\gp}(\Frob_{\gl})=\eta_{\bar\gp}(\Frob_{\gl})=\cL_{\gl}=-\tfrac{1}{2}\log_p(\ell)$ and 
$\eta_{\varphi}(\Frob_{\ell})=\cL_{\varphi,\lambda}$,  
\item  $\left.\tfrac{d}{dX}\right\mid _{X=0}a_\ell(\cF)=\psi(\gl)\cdot\left(\tfrac{\log_p(\ell)}{2\log_p(1+p^{\nu})} -\xi \cL\cdot  \cL_{\psi,\ell}\right)$. 
\end{enumerate}
\end{prop}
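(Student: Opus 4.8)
The plan is to mirror the proofs of Propositions~\ref{prop-split} and~\ref{prop-inert}: part~(i) is pure Class Field Theory, while part~(ii) reads the infinitesimal deformation of $\rho_\cF$ off Proposition~\ref{infinitesimal-rhoF}. The essential new difficulty is that, unlike at split or inert primes, $\rho_f$ is \emph{ramified} at $\ell\mid D$ (inertia acting through the quadratic character attached to $K/\Q$), so $a_\ell(\cF)$ is no longer the full trace of Frobenius but only its eigenvalue on a one-dimensional inertia-invariant line.

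For part~(i), I would first rerun the split-prime computation. As $\ell$ ramifies, $\gl=\bar\gl$ is the unique prime above it, and the exact sequence
\[0\to \Hom(\G_K,\bar\Q_p)\to \Hom(\cO_{K,\gp}^\times \times \cO_{K,\bar\gp}^\times \times K_{\gl}^\times ,\bar\Q_p) \to \Hom(\cO_{K}[\tfrac{1}{\gl}]^\times,\bar\Q_p)\]
sends $\eta_\gp$ and $\eta_{\bar\gp}$ to tuples whose $K_\gl^\times$-component is $\eta_\gp(\Frob_\gl)\cdot\ord_\gl$ (both cochains being unramified at $\gl$); evaluating on an $\ell$-unit and taking $u_\gl=\ell$, so that $\ord_\gl(\ell)=2$, yields $\eta_\gp(\Frob_\gl)=\eta_{\bar\gp}(\Frob_\gl)=\cL_\gl=-\tfrac12\log_p(\ell)$. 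For $\eta_\varphi(\Frob_\ell)=\cL_{\varphi,\lambda}$ I would repeat verbatim the inert-prime argument with the exact sequence over $H$, the $\ell$-unit $u_\lambda$ and the definition~\eqref{inert-L-inv}; here the three Frobenii $\Frob_\ell$, $\Frob_\gl$ and $\Frob_\lambda$ coincide because $\lambda\mid\gl\mid\ell$ is totally ramified and the residue extensions are trivial.

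For part~(ii) I would fix the inertia element $\gamma\in\G_\Q\setminus\G_K$ from \S\ref{l-units} and a Frobenius $\Phi=\Frob_\gl\in\G_{K_\gl}\subset\G_K$. Then $\rho_f(\Phi)=\psi(\gl)\,\mathrm{Id}$ (since $\bar\psi(\Frob_\gl)=\psi(\gl)$, as $\tau$ fixes $\gl$), while $\rho_f(\gamma)=\left(\begin{smallmatrix}0&\beta\\\beta^{-1}&0\end{smallmatrix}\right)$ with $\beta=\psi(\gamma\tau)$, so that the inertia-invariant line of $\rho_f$ is the $+1$-eigenline $v_0=(\beta,1)$, on which $\Phi$ acts by $a_\ell=\psi(\gl)$. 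By Proposition~\ref{infinitesimal-rhoF}, $\rho_\cF(\Phi)\equiv\psi(\gl)(\mathrm{Id}-M(\Phi)X)\bmod X^2$, whose two diagonal entries both equal $(\cL+\bar\cL)\cL_\gl=\cL_\gl/\log_p(1+p^\nu)=-\tfrac{\log_p(\ell)}{2\log_p(1+p^\nu)}$ by part~(i) and $\cL+\bar\cL=\log_p(1+p^\nu)^{-1}$, and whose upper-right entry is $\xi\cL\,\eta_\varphi(\Frob_\gl)=\xi\cL\,\cL_{\varphi,\lambda}$. Since $\Phi$ normalises $\I_\ell$ it preserves the two $\gamma$-eigenlines, which remain distinct modulo $X$; hence the inertia-invariant line deforms $v_0$, forcing $v_0$ to be an eigenvector of $M(\Phi)$. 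Reading that eigenvalue off the first coordinate, and using $\beta^{-1}=\psi(\tau\gamma)$ together with $\cL_{\psi,\ell}=\psi(\tau\gamma)\cL_{\varphi,\lambda}$ from~\eqref{psi-invariant}, gives
\[a_\ell(\cF)\equiv\psi(\gl)\Bigl(1+\bigl(\tfrac{\log_p(\ell)}{2\log_p(1+p^\nu)}-\xi\cL\,\cL_{\psi,\ell}\bigr)X\Bigr)\bmod X^2,\]
which is the asserted derivative; the two summands are precisely the determinant (weight) contribution carried by the diagonal of $M(\Phi)$ and the off-diagonal contribution carried by $\eta_\varphi(\Frob_\gl)$. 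In the excluded range $\ell D\mid N$ one argues as in Proposition~\ref{prop-inert} that $\rho_f^{\I_\ell}=0$, whence $a_\ell(\cF)=0$.

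The main obstacle will be the ramified local analysis underpinning part~(ii). In the split and inert cases $\rho_f$ is unramified at $\ell$ and $a_\ell(\cF)$ is simply $\tr\rho_\cF(\Frob_\ell)$, so one never has to single out a line; here one must verify that the one-dimensional inertia-invariant subspace genuinely deforms to first order, that it coincides with the Frobenius-stable $\gamma$-eigenline whose eigenvalue tends to $+1$, and that $v_0$ is consequently an eigenvector of $M(\Phi)$ -- equivalently that the compatibility $\eta_\varphi(\Frob_\gl)=\beta^2\,\bar\eta_\varphi(\Frob_\gl)$ holds, which should follow from the relation between $\eta_\varphi$ and its conjugate together with $\tau\Frob_\gl\tau^{-1}=\Frob_\gl$. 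Keeping straight which Frobenius or inertia element each cochain is evaluated at, and matching the ramified local reciprocity correctly, is the delicate bookkeeping that the argument hinges on.
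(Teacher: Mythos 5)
Your proposal is correct and follows essentially the same route as the paper: part (i) by the same class field theory exact sequences as in the split and inert cases (using that the cochains restrict trivially to inertia at $\gl$), and part (ii) by evaluating the infinitesimal matrix of Proposition~\ref{infinitesimal-rhoF} at $\Frob_\gl$ on the inertia-fixed vector $e_1+\psi(\tau\gamma)e_2$ and extracting the Frobenius eigenvalue from its first coordinate. The paper simply computes $(1,0)\,\rho_\cF(\Frob_\ell)\,v_0$ without comment, whereas you additionally justify (via $\Frob_\gl$ normalising $\I_\ell$ and the constancy of the inertia eigenlines) why $v_0$ is an eigenvector of the first-order matrix; this settles the compatibility you worry about at the end, so there is no gap.
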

\begin{proof}

(i) The  restrictions of $\eta_{\gp}$, $\eta_{\bar\gp}$ and $\eta_{\varphi}$  (see \eqref{trivial-at-ell}) to the inertia subgroup  at $\gl$ belong 
to $\Hom(\cO_{K_\gl}^\times, \bar\Q_p)=\{0\}$. Therefore their values at $\Frob_{\gl}$ are well defined (depending on the choice of
$\lambda\mid \gl$ for $\eta_{\varphi}$) and can be computed using Class Field Theory exactly as in Propositions \ref{prop-split} and \ref{prop-inert}. 

(ii) By (i) and Proposition~\ref{infinitesimal-rhoF}, 
$\rho_\cF(\gamma) \mod{X^2} = \rho_f(\gamma)=  \left( \begin{smallmatrix} 0& \psi(\gamma\tau) \\ \psi(\tau\gamma) & 0  \end{smallmatrix}\right)$ has a fixed line  generated by the vector $e_1+ \psi(\tau\gamma) e_2$.  Using again (i) and  Proposition~\ref{infinitesimal-rhoF}, together with \eqref{psi-invariant},   one finds 
\begin{align*}
&a_\ell(\cF) = (1,0)
\rho_{\cF}(\Frob_{\ell}) \left(\begin{smallmatrix} 1 
 \\   \psi(\tau\gamma)  \end{smallmatrix}\right)
  \\ 
&\equiv \psi(\gl)  (1,0) \left(\left(\begin{smallmatrix} 1 & 0
 \\ 0  & 1 \end{smallmatrix}\right) - \left(\begin{smallmatrix} (\bar\cL\eta_\gp+\cL\eta_{\bar\gp})(\Frob_{\gl}) & \xi \cL \eta_{\varphi}(\Frob_{\ell}) 
 \\  \xi^{-1} \bar\cL  \cS_{\bar\varphi} \bar\eta_{\varphi}(\Frob_{\ell})  &  (\cL\eta_\gp+\bar\cL\eta_{\bar\gp})(\Frob_{\gl}) \end{smallmatrix}\right) X \right)
 \left(\begin{smallmatrix} 1  \\ \psi(\tau\gamma) \end{smallmatrix}\right) \\  
& \equiv \psi(\gl)- \psi(\gl) \cdot\left( (\cL+\bar\cL)\cdot \cL_{\gl}+\xi \psi(\tau\gamma) \cL\cdot  \cL_{\varphi,\lambda}\right) 
\cdot X \pmod{X^2}. \qedhere
\end{align*}
\end{proof}

As  $\cF$ has  local type $1\oplus \varepsilon_K$ at $\ell$, one deduces that  
 $\tfrac{a_\ell(\cF)\cdot a_\ell(\cF\otimes\varepsilon_K)}{\psi(\gl)^2}=
\left(\tfrac{\det\rho_{\cF}}{\det\rho_f}\right)(\Frob_{\ell})$ which concords with Proposition~\ref{prop-ram}(ii). 

\subsection{On the $q$-expansion of CM  families specializing to $f$}

Denote by  $ \cC\ell_K^{(p)}(\gp^{\infty})$  the $p$-primary part of the ray class group of $K$ of conductor $\gp^{\infty}$. 
The torsion-free quotient $\cC\ell_K^{(p)}(\gp^{\infty})_{/\tor}$ is the Galois group of the
 unique $\gp$-ramified $\Z_p$-extension of $K$. We introduced in \cite[\S3]{betina-dimitrov} a $p$-adic avatar $\G_K \twoheadrightarrow \cC\ell_K^{(p)}(\gp^{\infty})_{/\tor} \to \bar\Z_p^{\times}$  of a Hecke character  of infinity type $(1, 0)$, further used to define of a universal character \[\G_K \twoheadrightarrow \cC\ell_K^{(p)}(\gp^{\infty})_{/\tor}   \hookrightarrow \bar\Z_p^{\times} \lsem \cC\ell_K^{(p)}(\gp^{\infty})_{/\tor}  \rsem^{\times}\] 
 interpolating $p$-adically Hecke characters of $K$ whose infinity type belongs to $\{(k,0); k\in \Z \}$. Its localization  yields a character  $\chi_\gp:\G_K \to \Lambda^{\times}$ such that   
 \begin{equation}\label{univ10}
\chi_\gp\equiv 1 - \frac{\eta_{\gp}}{\log_p(1+p^{\nu})}X\pmod{X^2}. 
\end{equation}
It is easy to check that $\chi_\gp\bar\chi_\gp$ extends to $\G_\Q$ and equals the universal cyclotomic character.

There exist two CM families $\Theta_{\psi}$ and $\Theta_{\bar\psi}$ specializing to $f$ in weight one and such that their attached $\Lambda$-adic representation is given by $\Ind^\Q_K \psi \chi_\gp$ and $\Ind^\Q_K \bar\psi \chi_\gp$ respectively, hence in particular their Fourier coefficients vanish at all primes $\ell$ inert in $K$. Since the  CM-line given by $y=0$ in Figure~\ref{graph} corresponds to the tangent space of $\Theta_{\psi}$, if one lets 
 $\cL=0$ in Proposition \ref{prop-split}, then  one obtains that  for any prime $\ell\ne p$ splitting in $K$
\[
 \left.\tfrac{d}{dX}\right\mid _{X=0}a_\ell(\Theta_{\psi})= - \frac{\psi(\gl)\cL_{\gl}+\psi(\bar{\gl})\cL_{\bar{\gl}}}{\log_p(1+p^{\nu})},  \text{ and }
\left.\tfrac{d}{dX}\right\mid _{X=0}a_p(\Theta_{\psi}) = \frac{\psi(\gp)  \cL_\gp}{\log_p(1+p^{\nu})}. 
\]

Finally,  for any $\ell \mid D$, one has (by letting  $\cL=0$ in  Proposition \ref{prop-ram}):
\[
 \left.\tfrac{d}{dX}\right\mid _{X=0}a_\ell(\Theta_{\psi})=\tfrac{\psi(\gl)\log_p(\ell)}{2\log_p(1+p^{\nu})}.  
\]

\subsection{Generalized eigenforms at weight $1$ CM points of the eigencurve}
Recall from \S\ref{sec-irregular} that one can   attach to $f$ a generalized eigenspace 
$S_{\sw(f)}^{\dag}\lsem f \rsem=S_{\sw(f)}^{\dag}\lsem \gm \rsem$. 
One clearly has $S_{\sw(f)}^{\dag}[\gm]=\bar\Q_p\cdot f$ and we have already observed that  classical subspace of
$S_{\sw(f)}^{\dag}\lsem f \rsem$ has a basis $\{f,\theta_{\psi}\}$ whose elements belong to $S_{\sw(f)}^{\dag}[\gm^2]$. 

Under the running assumption \eqref{generic-CM}, it is shown in  \cite{betina-dimitrov} that $\dim S_{\sw(f)}^{\dag}\lsem f \rsem=\dim S_{\sw(f)}^{\dag}[\gm^2]=4$ and hence the space $S_{\sw(f)}^{\dagger} \lsem f \rsem_0=S_{\sw(f)}^\dagger[\gm^2]_0$
of genuine overconvergent generalized  eigenforms defined in  \S\ref{sec-irregular} is two-dimensional. 
We consider the following forms in $S_{\sw(f)}^{\dagger} \lsem f \rsem$:
\begin{align}
f^\dag_{\cF} &=  \log_p(1+p^{\nu})\cdot  \left.\tfrac{d}{dX}\right\mid _{X=0}\left(\cF\otimes\varepsilon_K -\cF\right), \text{ and }\\
  f^\dag_{\Theta} &=  \log_p(1+p^{\nu})\cdot  \left.\tfrac{d}{dX}\right\mid _{X=0}\left(\Theta_{\bar\psi} -\Theta_{\psi}\right).
\end{align}

A  computation based on the  $q$-expansions Principle (see Proposition~\ref{q-expprinciple})
and Proposition~\ref{bad-Hecke-ops},  implies the following linear relation 
in $S_{\sw(f)}^{\dagger} \lsem f \rsem$: 
\[
 \log_p(1+p^{\nu})^2\cdot\left.\tfrac{d}{dX}\right\mid _{X=0}\left(
 \bar\cL \Theta_{\psi}+ \cL \Theta_{\bar\psi}- \tfrac{(\cL+\bar\cL)}{2}(\cF+\cF\otimes \varepsilon_K)
\right)
=\frac{\cLm(\varphi)\cdot \cLm(\bar\varphi)}{\cLm(\varphi)+\cLm(\bar\varphi)}(f-\theta_\psi).
  \]

\begin{proof}[Proof of Theorem \ref{main-q-exp}]
Parts (i)-(iii) are direct consequence of Propositions \ref{prop-split}, \ref{prop-inert} and \ref{prop-ram}, in view of the $q$-expansion Principle  (see Definition~\ref{ell-L-inv} for the definition of $ \cL_{\psi,\ell}$). Part (iv) results from the well-known  relations between the abstract Hecke operators. 
\end{proof}

\begin{proof}[Proof of Theorem \ref{thm-cross-ratio}] In the basis $(e_1, e_2)$ from \S\ref{CM-case}, $\rho_f(\tau)$ fixes the vector 
$e_1+e_2$, while the ordinary line of $\Theta_\psi$, resp. $\Theta_{\bar\psi}$ is spanned by $e_1$, resp. $e_2$. 
Finally by Proposition \ref{infinitesimal-rhoF} the ordinary line of $\cF$ is spanned residually by 
 $\xi\cdot e_1+ e_2$, allowing us to compute the desired  cross-ratio as follows: 
 \[\left[e_1+ e_2,  \xi\cdot e_1+ e_2;  e_1, e_2\right] =\frac{\left|\begin{smallmatrix} 1 & 1 \\ 1 & 0 \end{smallmatrix}\right|
 \cdot \left|\begin{smallmatrix} \xi & 0 \\ 1 & 1 \end{smallmatrix}\right|}
 {\left|\begin{smallmatrix} 1 & 0 \\ 1 & 1 \end{smallmatrix}\right|\cdot \left|\begin{smallmatrix} \xi & 1 \\ 1& 0 \end{smallmatrix}\right|}=\xi. \qedhere\]
\end{proof}

\begin{rem}\label{rem:f-dagger} [The point $f_\psi^{\dagger}$ in Figure~\ref{graph}] 
The Galois representation $\rho$ from Proposition \ref{prop-tangent}, whose semi-simplification 
is given by $\rho_{f|\G_K}$, has a unique ordinary line and the tangent space of the corresponding deformation problem is represented in Figure~\ref{graph}. It is clear that it has a unique CM deformation given by $\Theta_\psi$ 
and that both $\cF$ and its twist $\cF\otimes \varepsilon_K$ yield the same non-CM deformation. 
Then $f^\dag_{\psi}=  \left.\tfrac{d}{dX}\right\mid _{X=0}\left(\cF -\Theta_{\psi} \right)$  is a  genuine overconvergent generalized eigenform, and its Fourier coefficient at a prime $\ell$ is given by 
\[
a_\ell(f^\dag_{\psi})= \cL\cdot \begin{cases} 
(\psi(\gl) -\psi(\bar\gl)) \cdot(\cL_{\gl}-  \cL_{\bar{\gl}})&, \text{ if } \ell \ne p  \text{ splits as  }  \gl \cdot \bar{\gl}, \\
- \xi \cdot   \cL_{\psi,\ell}& , \text{ if } \ell \nmid N \text{ is inert   } \\
0 &, \text{ if } \ell \mid N \text{ is inert   } \\
\psi(\gp) \cdot \cLm(\bar\varphi)& , \text{ if } \ell=p, \\-
\psi(\gl) \cdot \xi \cdot \cL_{\psi,\ell} &, \text{ if } \ell \mid D. 
\end{cases}
\]
\end{rem}

\bibliographystyle{siam}

\end{document}